 \newcommand{\dist}{{\operatorname{dist}}}
\newcommand{\re}{\operatorname{Re}}
\newcommand{\im}{\operatorname{Im}}
\newcommand{\diam}{\operatorname{diam}}
 \newcommand{\supp}{\operatorname{supp}}
 \newcommand{\var}{\operatorname{var}}
   \theoremstyle{plain}%default
   \newtheorem{thm}{Theorem}[section]
   \newtheorem{prop}[thm]{Proposition}
   \newtheorem{lemma}[thm]{Lemma}  
   \newtheorem{cor}[thm]{Corollary}
   \theoremstyle{definition}
   \newtheorem{defn}[thm]{Definition}
   \newtheorem{example}[thm]{Example}
   \theoremstyle{remark}
   \newtheorem{obs}[thm]{Observation}
   \newtheorem{remark}[thm]{Remark}
\definecolor{mybgcolor}{gray}{0.8}
\definecolor{myframecolor}{rgb}{.647,.129,.149}
\newmdenv[style=mystyle]{important}
   \numberwithin{equation}{section}
        \date{\today}
\title[Conformal measures]{Dissipative conformal measures on locally compact spaces}  
\author{Klaus Thomsen}
\date{\today}
\email{matkt@imf.au.dk}
\address{Institut for Matematik, Aarhus University, Ny Munkegade, 8000 Aarhus C, Denmark}
\begin{document}

\maketitle

\begin{abstract} The paper introduces a general method to construct
  conformal measures for a local homeomorphism on a locally compact
  non-compact Hausdorff space, subject to mild irreducibility-like
  conditions. Among others the method is used to give necessary and
  sufficient conditions for the existence of eigenmeasures
  for the dual Ruelle operator associated to a locally compact non-compact
  irreducible Markov shift equipped with a uniformly continuous
  potential function. As an application to operator algebras the
  results are used to determine for which $\beta$ there
  are gauge invariant $\beta$-KMS weights on a simple graph
  $C^*$-algebra when the one-parameter automorphism group is given by a uniformly
  continuous real-valued function on the path space of the graph.

\end{abstract}

\section{Introduction}

A conformal measure for a discrete dynamical system made it first
appearance in the work of D. Sullivan, \cite{S}, in connection with
rational maps on the Riemann sphere, before the notion was coined and introduced
in a more general setting by M. Denker and M. Urbanski in
\cite{DU}. Conformal measures corresponding to various potential
functions now play important roles in the study of dynamical systems,
e.g. in holomorphic dynamics where they are used
as a tool to study the structure of the Julia set, among others. In
the setting of topological Markov shifts conformal measures arise
naturally via the thermodynamic formalism introduced by D. Ruelle, and
they constitute a key ingredient in the study of topological
Markov shifts with a countable number of states, \cite{Sa1},\cite{Sa2}. During the last
decade it has been realised that they also make their appearance in
connection with quantum statistical models that are based on
$C^*$-algebras and one-parameter group actions arising from local
homeomorphisms. This relation, which until recently only involved probability
measures on the dynamical system side and KMS states on the operator
algebra side, was extended in \cite{Th4} to a bijective correspondence
between general (possibly infinite) conformal measures and KMS weights. It is
this new connection between dynamical systems and $C^*$-algebras which
motivates the present study. Conformal measures are often required to
be probability measures, but for dynamical systems on non-compact
spaces, such as countable state Markov shifts for example, it is crucial to allow the measures to be
infinite. Nonetheless our knowledge of general, possibly infinite conformal measures is
lacking when it comes to the problem of determining the KMS weights in
the $C^*$-algebraic setting mentioned above, maybe because of reluctance to consider measures that are not conservative.

The
present paper seeks to improve on this by introducing a method to
construct conformal measures for a local homeomorphism on a locally compact
non-compact Hausdorff space which exploits the possibility of taking
limits along sequences that go to infinity. The method works in a
quite general non-compact setup and produces a non-zero fixed
measure for the dual Ruelle operator when the pressure of the potential is non-positive, and not
only when it is zero. More precisely, given a locally compact second countable
Hausdorff space $X$ and a local homeomorphism $\sigma : X \to X$, we
shall say that $(X,\sigma)$ is \emph{cofinal} when the equality
\begin{equation}\label{a25}
X = \bigcup_{i,j \in \mathbb N} \sigma^{-i}\left(
  \sigma^j\left(U\right)\right) 
\end{equation}
holds for every open non-empty subset $U \subseteq X$, and that
$(X,\sigma)$ is \emph{non-compact} when
\begin{equation}\label{c25}
\bigcup_{k=0}^{\infty} \sigma^k(U) 
\end{equation}
is not pre-compact (i.e. does not have compact closure) for any open
non-empty subset $U \subseteq X$. Assuming that $(X,\sigma)$ has these
two properties and given any continuous real-valued
function $\phi : X \to \mathbb R$, referred to in the following as the
\emph{potential}, there is a natural notion of
pressure $\mathbb P(\phi)$, and the method to be described produces regular non-zero $\phi$-conformal measures when $\mathbb
P(-\phi) \leq 0$. The measures can be finite or infinite, but they are  always
dissipative when $\mathbb P(-\phi) < 0$.

This construction is an extension of the
method by which positive eigenvectors of an infinite non-negative
matrix can be produced, and has its roots in the way harmonic functions are
constructed for Markov chains. See \cite{Th4} and \cite{Th5} for more details on this
connection. The extension we present here is also inspired by the PhD thesis of
Van T. Cyr, \cite{Cy}, where a similar method was used to produce
conformal measures for transient potentials with zero pressure on mixing
countable state Markov shifts. In that setting our results are more
complete because we obtain also a necessary condition for the
existence of a $\phi$-conformal measure: For a locally compact non-compact irreducible
Markov shift and a uniformly continuous potential $\phi$, a $\phi$-conformal measure exists if and only if $\mathbb P(-\phi) \leq 0$.

Besides the countable state Markov shifts a large and interesting
class of examples come from transcendental entire maps. When there are no critical
points in the Julia set the map is a local homeomorphism on its Julia
set, giving rise to a dynamical system which is both cofinal and
non-compact, possibly after the
deletion of a single exceptional fixed point. A prominent
example of this is given by the exponential family $\lambda e^z$ and in Section
\ref{exp} we consider the hyperbolic members of this family obtained by
choosing $\lambda$ real and between $0$ and $e^{-1}$, in order to show that the method
we introduce combines nicely with those developed
by Mayer and Urbanski in \cite{MU1}, \cite{MU2}, and gives rise to conformal
measures for the geometric potential $\log |z|$, not only when the
pressure function is zero, which by a result in \cite{MU1} occurs
exactly when the inverse temperature $t$ is equal to the Hausdorff
dimension $HD$ of the radial Julia set, but for all $t \geq HD$. The
additional conformal measures arise immediately from the general
results once we have shown that the notion of pressure employed here
agrees with that used by Mayer and Urbanski.

As indicated above our interest in conformal measures stems
from the bijective correspondence between $\beta
\phi$-conformal measures on $X$ and gauge invariant $\beta$-KMS
weights for the
one-parameter group arising from the
triple $(X,\sigma, \phi)$ by a well known canonical construction. In
the final section we give a brief introduction to this connection and summarise
the consequences of our results in the operator algebra setting. In
particular, they allow us to extend the results from \cite{Th4}
concerning the $\beta$-KMS weights of the gauge action on a simple graph
algebra to actions coming from an arbitrary uniformly continuous
potential. %The method also enables us to give the first example of a
%simple purely infinite graph algebra for which the gauge action has a KMS state for
%more than one inverse temperature. 

%\bigskip

%\bigskip

%\bigskip

\section{Conformal measures}\label{Sec1}

Throughout the paper $X$ is a locally compact second countable Hausdorff space, $\sigma : X \to
X$ is a local homeomorphism and $\phi : X \to \mathbb R$ is a
continuous function.  

\begin{defn}\label{a38} A regular and non-zero Borel measure $m$ on
  $X$ is a \emph{$\phi$-conformal measure} when
\begin{equation}\label{a39}
m\left(\sigma(A)\right) = \int_A e^{\phi(x)} \ d m(x)
\end{equation}
for every Borel subset $A \subseteq X$ with the property that $\sigma
: A \to X$ is injective.
\end{defn} 

Let $C_c(X)$ be the space of continuous compactly supported functions
on $X$. We study conformal measures via the \emph{Ruelle operator}
$L_{\phi} : C_c(X) \to C_c(X)$ defined
such that
\begin{equation}\label{a47}
L_{\phi}(f)(x) = \sum_{y \in \sigma^{-1}(x)} e^{\phi(y)} f(y) .
\end{equation}
Since $L_{\phi} : C_c(X) \to C_c(X)$ is linear and takes non-negative
functions to non-negative functions it follows from the
Riesz representation theorem that $L_{\phi}$ defines a map
$m \mapsto L_{\phi}^*(m)$ on the set of regular Borel measures $m$ on $X$ by the
requirement that
\begin{equation*}\label{a40} 
\int_X f  \ d L_{\phi}^*(m) \ = \ \int_X L_{\phi}(f) \ dm
\end{equation*}
for all $f \in C_c(X)$.

\begin{lemma}\label{a101} Let $m$ be a non-zero regular Borel
  measure on $X$. The following are equivalent:
\begin{enumerate}
\item[1)] $m$ is ${\phi}$-conformal. 
\item[2)] $ L_{-\phi}^*(m) = m$.
\item[3)] When $U \subseteq X$ is an open subset such that $\sigma: U
\to X$ is injective and $g \in C_c(U)$,
$$
\int_{\sigma(U)} g \circ \left(\sigma|_U\right)^{-1} \ dm =  \int_U ge^{\phi}  \ d m.
$$
%where $\sigma|_U : U \to \sigma(U)$ is the restriction of $\sigma$ to
%$U$.
\item[4)] There is a covering $X = \bigcup_i U_i$ of $X$ by open
  subsets $U_i$ such that, for every $i$, $\sigma: U_i
\to X$ is injective and for all $g \in C_c(U_i)$,
$$
\int_{\sigma(U_i)} g \circ \left(\sigma|_{U_i}\right)^{-1} \ dm =  \int_{U_i} ge^{\phi}  \ d m.
$$
%where $\sigma|_{U_i} : U_i \to \sigma(U_i)$ is the restriction of $\sigma$ to
%$U_i$.

\end{enumerate}

\end{lemma}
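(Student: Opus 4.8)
The plan is to prove the cycle of implications $1)\Rightarrow 2)\Rightarrow 3)\Rightarrow 4)\Rightarrow 1)$, since this is the most economical way to establish equivalence of four conditions. The conceptual heart lies in translating between the global integral identity defining conformality, the fixed-point equation for the dual Ruelle operator, and the localized versions on injectivity sets. I expect the passage between the operator-theoretic statement 2) and the measure-theoretic statement 1) to be the main obstacle, because it requires unwinding the definition of $L_{-\phi}^*$ and relating the pointwise formula for $L_{-\phi}$ to integration against $m$ over sets where $\sigma$ is injective.

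\smallskip

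For $1)\Rightarrow 2)$, I would start from the observation that $X$, being locally compact, second countable, and carrying a local homeomorphism, admits a countable cover by open sets $U_i$ on each of which $\sigma$ restricts to a homeomorphism onto its image; using a partition of unity subordinate to this cover I would reduce the verification of $\int_X L_{-\phi}(f)\,dm = \int_X f\,dm$ to testing functions $f$ supported in a single $U_i$. For such $f$, the sum defining $L_{-\phi}(f)(x) = \sum_{y\in\sigma^{-1}(x)} e^{-\phi(y)}f(y)$ collapses, on the set $\sigma(U_i)$, to the single term coming from the local inverse $(\sigma|_{U_i})^{-1}$, so that $\int_X L_{-\phi}(f)\,dm = \int_{\sigma(U_i)} e^{-\phi\circ(\sigma|_{U_i})^{-1}} f\circ(\sigma|_{U_i})^{-1}\,dm$. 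The conformality relation \eqref{a39}, applied to the Borel sets on which $\sigma$ is injective, is exactly the change-of-variables statement that converts this last integral over $\sigma(U_i)$ into $\int_{U_i} f\,dm$; more precisely, \eqref{a39} says $m\circ\sigma = e^{\phi}\,m$ on injectivity sets, so pushing the measure forward through $(\sigma|_{U_i})^{-1}$ introduces precisely the Jacobian factor $e^{\phi}$ that cancels the $e^{-\phi}$ weight. This gives $\int_X L_{-\phi}(f)\,dm = \int_X f\,dm$, i.e. $L_{-\phi}^*(m)=m$.

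\smallskip

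The implication $2)\Rightarrow 3)$ is the reverse of the computation just described, carried out for a single open injectivity set $U$ and a test function of the form $g\,e^{\phi}$ with $g\in C_c(U)$: evaluating $L_{-\phi}(ge^{\phi})$ on $\sigma(U)$ yields $g\circ(\sigma|_U)^{-1}$, so the fixed-point equation $\int_X L_{-\phi}(ge^{\phi})\,dm = \int_X ge^{\phi}\,dm$ reads exactly as the asserted identity $\int_{\sigma(U)} g\circ(\sigma|_U)^{-1}\,dm = \int_U ge^{\phi}\,dm$. The step $3)\Rightarrow 4)$ is immediate: a covering as in 4) exists because $\sigma$ is a local homeomorphism, and 3) applies verbatim to each member $U_i$ of that cover.

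\smallskip

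The final implication $4)\Rightarrow 1)$ is where I would spend the most care, since recovering the full Borel statement \eqref{a39} from identities tested only against $C_c(U_i)$ requires an approximation and localization argument. Given a Borel set $A$ with $\sigma|_A$ injective, I would first reduce to the case where $A$ is contained in a single $U_i$ by writing $A$ as a countable disjoint union of pieces $A\cap V_i$ for a suitable refinement of the cover (replacing $\{U_i\}$ by a countable partition into Borel injectivity sets subordinate to it), and using countable additivity of both sides of \eqref{a39}. On a single piece, the identity in 4), valid for all $g\in C_c(U_i)$, extends by the regularity of $m$ and a monotone/dominated convergence argument to indicator functions of Borel subsets of $U_i$, which is precisely \eqref{a39}. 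The technical point to watch here is that the extension from continuous compactly supported test functions to characteristic functions uses the regularity hypothesis on $m$ together with the fact that $g\mapsto g\circ(\sigma|_{U_i})^{-1}$ is a homeomorphism-induced isomorphism of the relevant function spaces, so that approximation on one side transfers faithfully to the other.
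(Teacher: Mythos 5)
Your proof is correct and follows essentially the same route as the paper's: the paper establishes the cycle $1)\Rightarrow 3)\Rightarrow 4)\Rightarrow 2)\Rightarrow 1)$, and you traverse the cycle in the opposite direction, but every implication rests on the same ingredients --- the collapse of the sum defining $L_{-\phi}(f)(x)$ to a single term on an injectivity set, the change of variables encoded by (\ref{a39}), and the recovery of the Borel-set identity from $C_c$ test functions via monotone approximation together with regularity of the two finite measures $A \mapsto m(\sigma(A))$ and $A \mapsto \int_A e^{\phi}\,dm$ on a relatively compact injectivity neighbourhood. The only cosmetic difference is that your $1)\Rightarrow 2)$ invokes a partition of unity where the paper's $1)\Rightarrow 3)$ simply reads off the identity for $g=1_A$ as being literally (\ref{a39}).
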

\begin{proof} The equivalence between 1) and 2) was observed by
  Renault in \cite{Re2}, and a version of it appeared, in a slightly
  different setting, already in \cite{DU} where conformal measures
  were first introduced. Since the lemma is a crucial tool in the
  following, we present the proof here. We denote the characteristic function of a set $S$ by
$1_S$. 

1) $\Rightarrow$ 3): Let $U$ be as in 3). By linearity and continuity it suffices to
  establish the desired identity when $g = 1_A$ for some Borel
  subset $A \subseteq
  U$. In this case the identity is the same as (\ref{a39}).

3) $\Rightarrow$ 4) is trivial.

4) $\Rightarrow$ 2): By linearity it suffices to
show that
$$
\int_X L_{-\phi}(f) \ dm = \int_X f \ dm
$$
when $f$ is a supported in $U_i$. In that case the definition of
$L_{-\phi}$ shows that
$$
\int_X L_{-\phi}(f) \ dm = \int_{\sigma(U_i)}
e^{-\phi\left(\left(\sigma|_{U_i}\right)^{-1}(x)\right)} f\left(
  \left(\sigma|_{U_i}\right)^{-1}(x)\right) \ dm,
$$
which, thanks to 4) is equal to $\int_{U_i} f \ dm = \int_X f \ dm$.

2) $\Rightarrow$ 1): To establish (\ref{a39}) we write $A = \sqcup_i
A_i$ as a disjoint union of Borel sets $A_i$ such that for each $i$ there is an open and relatively compact subset $U_i \subseteq X$
where $\sigma$ injective, and $A_i \subseteq U_i$. Then 
$m\left(\sigma(A)\right) = \sum_i m\left(\sigma(A_i)\right)$
and
$$
\int_A e^{\phi(x)} \ dm(x) = \sum_i \int_{A_i} e^{\phi(x)} \ dm(x).
$$  
It suffices therefore to establish (\ref{a39}) when $A \subseteq U$
for some open and relatively compact subset $U \subseteq X$ where $\sigma$ is injective. Let
$V \subseteq U$ be open. It
follows from 2) that
$$
\int_{\sigma(V)}
e^{-\phi\left(\left(\sigma|_U\right)^{-1}(x)\right)} f\left(
  \left(\sigma|_U\right)^{-1}(x)\right) \ dm  = \int_V f \ dm,
$$
when $f \in C_c(V)$. Inserting for $f$ an increasing sequence from $C_c(V)$
converging up to $e^{\phi}1_V$, we find that (\ref{a39}) holds when
$A=V$ is an open subset of $U$. Since both measures,
$$
A \mapsto m\left(\sigma (A)\right) \ \text{and} \ A \mapsto \int_{A} e^{\phi} \
dm
$$
are finite Borel measures on $U$, they are also regular. It follows therefore that (\ref{a39}) holds for every
Borel subset $A$ of $U$.

\end{proof}

%\bigskip

%\bigskip

%\bigskip

\section{Pressures associated with a cofinal local homeomorphism}

In this section we add first the assumption that $\sigma$ is cofinal,
i.e. for every open non-empty subset $U$ of $X$, the identity
(\ref{a25}) holds. We will use the notation $\|f\|$ for the supremum norm of
a bounded function $f$ on $X$, and we introduce the notation $\phi_n$
for the function
$$
\phi_n(x) = \sum_{j=0}^{n-1} \phi\left(\sigma^j(x)\right) .
$$
 
\begin{lemma}\label{b26} Assume that $(X,\sigma)$ is cofinal. Let $K$ be a compact subset of $X$ and $g \in
  C_c(X)$ a non-zero and non-negative function. There is an $N \in \mathbb N$ and a constant $C > 0$ such that
\begin{equation}\label{b27}
\left|L^n_{\phi}(f)(x)\right| \leq C\left\|f\right\|
\sum_{1 \leq i,j \leq N} L^{n+ j-i}_{\phi}(g)(x)
\end{equation}
for all $n > N$, all $x \in X$, and every function $f \in C_c(X)$ with
support in $K$.
\end{lemma}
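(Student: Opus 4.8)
The plan is to use the explicit iterated formula $L^n_\phi(f)(x) = \sum_{y \in \sigma^{-n}(x)} e^{\phi_n(y)} f(y)$, so that for $f$ supported in $K$ one immediately gets $\left|L^n_\phi(f)(x)\right| \le \|f\| \sum_{y \in \sigma^{-n}(x) \cap K} e^{\phi_n(y)}$; the whole problem then reduces to dominating this last sum by a constant multiple of $\sum_{1 \le i,j \le N} L^{n+j-i}_\phi(g)(x)$. The idea is to bend each backward orbit that hits $K$ onto a backward orbit meeting the region where $g$ is positive, paying only a bounded multiplicative price along the way.

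First I would fix $\delta > 0$ and set $V = \{g > \delta\}$, a non-empty open set, and invoke cofinality to write $X = \bigcup_{i,j} \sigma^{-i}(\sigma^j(V))$. Since a local homeomorphism is open and continuous, each $\sigma^{-i}(\sigma^j(V))$ is open, so compactness of $K$ gives a finite subcover $K \subseteq \bigcup_{k=1}^M \sigma^{-i_k}(\sigma^{j_k}(V))$; using the inclusion $\sigma^{-i}(\sigma^j(V)) \subseteq \sigma^{-(i+1)}(\sigma^{j+1}(V))$ I may assume $1 \le i_k, j_k$, and I set $N = \max_k \max(i_k, j_k)$. For $y$ in the $k$-th piece, putting $p = \sigma^{i_k}(y)$, cofinality supplies $v \in V$ with $\sigma^{j_k}(v) = p$; then $y$ and $v$ land on $x$ after $n$, respectively $n+j_k-i_k$, iterations, and the cocycle identity gives $\phi_n(y) - \phi_{n+j_k-i_k}(v) = \phi_{i_k}(y) - \phi_{j_k}(v)$. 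Because the first $N$ forward iterates of $y \in K$ and of $v \in \supp g$ remain in fixed compact sets, $\phi$ is bounded there and this difference is bounded by a constant independent of $n$, $x$ and $y$; combined with $g(v) \ge \delta$ this converts $e^{\phi_n(y)}$ into at most a constant times $e^{\phi_{n+j_k-i_k}(v)} g(v)$.

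It then remains to sum over $y$ without the estimate deteriorating, and this is the step I expect to be the real obstacle. The assignment $y \mapsto v$ factors through $y \mapsto p = \sigma^{i_k}(y)$ followed by $p \mapsto v$, and the second map is injective since $p = \sigma^{j_k}(v)$ recovers $p$ from $v$; the only overcounting therefore comes from the fibres of $y \mapsto \sigma^{i_k}(y)$ over $K$. Here I would use that a composition of local homeomorphisms is again a local homeomorphism, cover $K$ by finitely many open sets on which $\sigma^{i}$ ($i \le N$) is injective, and conclude that $\left|\sigma^{-i}(p) \cap K\right|$ is bounded by a constant $r$ uniform in $p$ and in $i \le N$. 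This uniform multiplicity bound is what allows me to replace the sum over $y$ in the $k$-th piece by at most $r$ times the sum over the corresponding points $v \in \sigma^{-(n+j_k-i_k)}(x)$, which in turn is at most $L^{n+j_k-i_k}_\phi(g)(x)$. Finally, since distinct $k$ with the same pair $(i_k,j_k)$ define the same set I may take the index pairs distinct, so summing the $M \le N^2$ pieces and using $1 \le i_k, j_k \le N$ bounds everything by $\sum_{1 \le i,j \le N} L^{n+j-i}_\phi(g)(x)$; collecting the constants and requiring $n > N$, so that every exponent $n+j-i$ is positive, yields (\ref{b27}).
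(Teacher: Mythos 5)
Your argument is correct and follows essentially the same route as the paper's proof: cover $K$ by finitely many sets $\sigma^{-i}(\sigma^{j}(U))$ using cofinality, reroute each preimage $y\in\sigma^{-n}(x)\cap K$ through a point of $\{g>\delta\}$ via the cocycle identity $\phi_n(y)-\phi_{n+j-i}(v)=\phi_i(y)-\phi_j(v)$, bound that difference by compactness, and control the overcounting by the uniformly finite multiplicity of $\sigma^i$ on $K$. The only differences are bookkeeping ones (you justify explicitly the finiteness of the multiplicity constant and the reduction to $i,j\geq 1$, which the paper leaves implicit), so there is nothing to add.
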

\begin{proof} There is a $\delta > 0$ and an open non-empty subset $U
  \subseteq  X$ such that $g(x) \geq \delta$ for all $x \in U$. Since
  $\sigma$ is cofinal and $K$ compact there is an $N \in \mathbb N$ such that 
\begin{equation}\label{c27}
K \subseteq  \bigcup_{1 \leq i,j \leq N}
\sigma^{-i}\left(\sigma^j(U)\right)  .
\end{equation}
Set
\begin{equation*}
\begin{split}
&C_1 =  \sup \left\{ e^{-\phi_i(z)}: \ 1 \leq i \leq N,  \ z \in \supp g\right\} , \\
& C_2 =  \sup \left\{ e^{\phi_i(z)}: \
  1 \leq i \leq N,
  \ z \in K\right\}.
\end{split}
\end{equation*} 
%and
%$$
%C_0 = \sup \left\{ e^{\phi(y)} + e^{-\phi(y)} : \ y \in K \cup
%  \supp g\right\}.
%$$ 
Consider an element $f \in C_c(X)$ with support in
$K$ and let  $x \in X, n \in \mathbb N$, $n > N$. Since
$\sigma^{-n}(x) \cap K$ is a finite set, it follows from (\ref{c27})
that we can write $ \sigma^{-n}(x) \cap K$ as a disjoint union 
$$ \sigma^{-n}(x) \cap K  = \sqcup_{1 \leq i,j \leq N} B_{i,j},
$$
where $B_{i,j} \subseteq \sigma^{-i}\left(\sigma^j(U)\right)$ are finite sets. 
For $y \in B_{i,j}$ we choose $a_y \in U$ such that $\sigma^i(y) = \sigma^j(a_y)$. Then
\begin{equation}\label{c30}
L_{\phi}^n(f)(x) \ \ \ = \sum_{y \in \sigma^{-n}(x) \cap K} e^{\phi_n(y)}
f(y) = \sum_{1 \leq i,j \leq N} \sum_{y\in B_{i,j}}  e^{\phi_n(y)}
f(y) .
\end{equation}
For $y \in B_{i,j}$,
$$ 
e^{\phi_n(y)}|
f(y)| =  e^{\phi_i(y)}e^{\phi_{n-i}(\sigma^i(y))} |f(y)| =
e^{\phi_i(y)}e^{\phi_{n-i}(\sigma^j(a_y))}|f(y)|.
$$
Since $|f(y)| \leq \delta^{-1} \left\|f\right\|g(a_y)$, we find that
\begin{equation}\label{c33}
\begin{split}
&e^{\phi_n(y)}|
f(y)| \leq \delta^{-1}\left\|f\right\|
e^{\phi_i(y)}e^{\phi_{n-i}(\sigma^j(a_y))}g(a_y)\\
& =\delta^{-1} \|f\|
e^{\phi_i(y)}e^{-\phi_j(a_y)}e^{\phi_{n-i+j}(a_y)}g(a_y) \leq
\left\|f\right\| \delta^{-1}C_1C_2e^{\phi_{n-i+j}(a_y)}g(a_y) .
\end{split}
\end{equation}
To control the ambiguity of the association $y \mapsto
a_y$,
note that
$$
M_i = \sup_{y \in K} \# \left\{ z \in K : \ \sigma^i(z) = \sigma^i(y)
\right\}
$$
is finite for every $i \in \mathbb N$. Set $M = \max_{1 \leq i \leq N}
M_i$. Then
\begin{equation}\label{c31}
\begin{split}
\sum_{y \in B_{i,j}} e^{\phi_{n-i+j}(a_y)}g(a_y) \leq 
  M \sum_{ z \in
    \sigma^{-n-j+i}(x)}e^{\phi_{n-i+j}(z)}g(z)  = M L^{n-i+j}_{\phi}(g)(x).
\end{split}
\end{equation}
By combining (\ref{c30}), (\ref{c33}) and (\ref{c31}) we obtain
(\ref{b27}) if we set
$C = \delta^{-1}C_1C_2 M$.
\end{proof}

\begin{cor}\label{b28} Assume that $\sigma$ is cofinal. Let $f,g \in
  C_c(X)$ be non-zero and non-negative. It follows that
$$
\limsup_n \left(L^n_{\phi}(f)(x)\right)^{\frac{1}{n}} =  \limsup_n
  \left(L^n_{\phi}(g)(x)\right)^{ \frac{1}{n}}
$$
for all $x\in X$.
\end{cor}
\begin{proof} Let $K$ be a compact set containing the support of $f$ and
  let $C$ and $N$ be the numbers from Lemma \ref{b26}. If $\lambda > 0$ and $L^n_{\phi}(g)(x) \leq \lambda^n$
  for all $n \geq k$, it follows from (\ref{b27}) that for all $n \geq
  k + N $ there is a $j \in [n-N,n+N]$ such that
$$
\left( C \|f\| N^2\right)^{-1} L^n_{\phi}(f)(x) \leq \lambda^j .
$$
Thus 
$$
\left( C \|f\| N^2\right)^{-\frac{1}{n}}
  \left(L^n_{\phi}(f)(x)\right)^{\frac{1}{n}} \leq \alpha_n \lambda,
$$
for all large $n$,
where $\alpha_n = \max \left\{ \lambda^{\frac{j}{n} -1} : \ n-N \leq j
  \leq n+N \right\}$. Since $ \lim_{n \to \infty} \alpha_n =\lim_{n \to \infty} \left( C \|f\|
  N^2\right)^{-\frac{1}{n}} = 1$ we
conclude first that 
$\limsup_n
  \left(L^n_{\phi}(f)(x)\right)^{ \frac{1}{n}} \leq \lambda$ and then
  that 
$$
 \limsup_n
  \left(L^n_{\phi}(f)(x)\right)^{ \frac{1}{n}} \leq  \limsup_n
  \left(L^n_{\phi}(g)(x)\right)^{ \frac{1}{n}}.
$$
This argument shows that if one of the two limes superiors is finite
then so is the other, and by symmetry that they agree. Hence if one is
infinite, so is the other.
\end{proof}

In the following we denote by $C_c(X)^+$ the set of non-negative and
non-zero elements of $C_c(X)$. Using the convention that $\log 0 =
-\infty$ and $\log \infty = \infty$, it follows from Corollary
\ref{b28} that when $\sigma$ is cofinal,
we can define 
$$
\mathbb P_x(\phi)=  \log \left( \limsup_n
\left(L^n_{\phi}(f)(x)\right)^{\frac{1}{n}}\right) \  \in \ \left[-\infty,\infty\right],
$$
independently of which element $f \in
C_c(X)^+$ we use. We subsequently define the \emph{pressure} $\mathbb P(\phi)$ of
$\phi$ to be 
$$
\mathbb P(\phi) = \sup_{x \in X} \mathbb P_x(\phi) ,
$$
which is again an extended real number, i.e. $\mathbb P(\phi) \in [-\infty,\infty]$.

%\bigskip

%\bigskip

%\bigskip

\subsection{The pressures associated to a cofinal Markov
  shift}\label{MS1}

In this section we relate the pressure defined above to the Gurevich
pressure known from topological Markov shifts. Because we can, and since
it is important for the applications to graph $C^*$-algebras, we work
in the same generality as in \cite{Th4} and \cite{Th5}, rather than restricting
the attention to irreducible or mixing Markov shifts.

Let $G$ be a directed graph with vertex set $V_G$ and edge
set $E_G$. We assume that both $V_G$ and $E_G$ are countable, and that
$G$ is 'row-finite', in the sense that the number
of edges emitted from any vertex is finite. Furthermore, we assume
that there are no sinks, i.e. every vertex emits an edge. 

An \emph{infinite path} in $G$ is an element $p = (p_i)_{i=1}^{\infty}  \in
\left(E_G\right)^{\mathbb N}$ such that $r(p_i) = s(p_{i+1})$ for all
$i$, where we have used the notation $r(e)$ and $s(e)$ for the range and source of an
edge $e \in E_G$, respectively.  A finite path $\mu = e_1e_2 \dots e_n$ is
defined similarly, and we extend the range and source maps to finite
paths such that $s(\mu) = s(e_1)$ and
$r(\mu) = r(e_n)$. The number of edges in $\mu$ is its \emph{length}
and we denote it by $|\mu|$. We let $\mathcal P(G)$ denote the set of
infinite paths in $G$ and extend the source map to $\mathcal P(G)$ such that
$s(p) = s(p_1)$ when $p = (p_i)_{i=1}^{\infty}$. To describe the
topology of $\mathcal P(G)$, let $\mu =
e_1e_2\cdots e_n$ be a finite path in $G$. We can then consider the \emph{cylinder} 
$$
Z(\mu) = \left\{p \in \mathcal P(G) : \ p_i = e_i, \ i =1,2, \dots,n
\right\}.
$$
$\mathcal P(G)$ is a totally disconnected second countable locally compact
Hausdorff space in the topology for which the collection of cylinders is
a base, \cite{KPRR}. The left shift $\sigma : \mathcal P(G) \to
\mathcal P(G)$ is the
map defined such that
$$
\sigma\left(e_0e_1e_2 e_3 \cdots\right) = e_1e_2e_3 \cdots .
$$ 
Note that $\sigma$ is a local homeomorphism. It is not difficult to
see that $(\mathcal P(G),\sigma)$ is cofinal if and only if $G$ is cofinal in the
sense introduced in \cite{KPRR}: If $v\in V_G$ and $p \in \mathcal P(G)$, there
is a finite path $\mu$ in $G$ and an $i \in \mathbb N$ such that
$s(\mu) = v$ and $r(\mu) = s(p_i)$. In the following we assume that
$G$ is a countable graph such that
\begin{enumerate}
\item[$\bullet$] $G$ is row-finite,
\item[$\bullet$] $G$ has no sinks, and
\item[$\bullet$] $G$ is cofinal.
\end{enumerate} 
For simplicity we summarise these conditions by saying that $G$ is
\emph{cofinal} when they hold. We denote by $NW_G$ the set of vertexes
$v$ which are
contained in a loop, meaning that there is finite path $\mu$ such that
$s(\mu) = r(\mu) = v$. These vertexes are called \emph{non-wandering},
and together with the edges they emit they constitute an irreducible
(or strongly connected) subgraph of $G$ which
we also denote by $NW_G$.

 For a continuous real-valued function
$\phi : \mathcal P(G) \to \mathbb R$ the \emph{Gurevich pressure}
$P_{NW_G}(\phi)$ of the restriction of $\phi$ to $\mathcal P(NW_G)$ is defined to be
\begin{equation}\label{c24}
\limsup_n \frac{1}{n} \log \sum_{\sigma^n(y)  =y} e^{\phi_n(y)} 1_{[v]}(y),
\end{equation} 
where $v$ is a vertex in $NW_G$ and $[v] = \left\{ x \in \mathcal P(G)
  : \ s(x) = v\right\}$, cf. e.g. \cite{Sa2}. It has been shown in increasing
generality by O.Sarig, \cite{Sa1}, \cite{Sa2}, that the 'limsup' above
is a limit which is independent of the choice of $v$ when $G$ is mixing (or primitive), and $\phi$ satisfies some
condition on its variation. In the general irreducible case, the
sequence involved in (\ref{c24}) will not converge, but the
independence of the choice of vertex is a general fact, at least when
$\phi$ is uniformly continuous in an appropriate metric. To make this
precise, set
$$
\var_k(\phi) = \sup \left\{\left|\phi(x) -\phi(y)\right|: \ x_i =
  y_i, \ i =1,2,\dots, k \right\} .
$$
We shall work with the assumption that
$\lim_{k \to \infty} \var_k (\phi) = 0$. Note that this condition
is implied by the uniform continuity of $\phi$ with respect to any
metric $d$ on $\mathcal P(G)$ with the property that for any $\delta  > 0$
there is a $k \in \mathbb N$ such that $x_i
= y_i, i = 1,2,\cdots,k  \ \Rightarrow \ d(x,y) \leq \delta$.

The next lemma follows from Proposition 3.2 in \cite{Sa2} when $G$ is
primitive and $\phi$ has the Walters property.

\begin{lemma}\label{c19} Assume that $G$ is cofinal and that $\lim_{k \to \infty} \var_k(\phi) =
  0$. The value of (\ref{c24}) does not depend on
  the choice of $v \in NW_G$, and for every finite path $\mu$ in $NW_G$,
$$
P_{NW_G}(\phi) = \limsup_n \frac{1}{n} \log \sum_{\sigma^n(y) = y}
e^{\phi_n(y)} 1_{Z(\mu)}(y) .
$$
\end{lemma}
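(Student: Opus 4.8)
The plan is to run the standard concatenation argument for the Gurevich pressure, using the irreducibility of $NW_G$ to transport loops between vertices and the hypothesis $\lim_k\var_k(\phi)=0$ to control the resulting change in the Birkhoff weights. Throughout write $Z_n(S) = \sum_{\sigma^n(y)=y} e^{\phi_n(y)}1_S(y)$ for $S$ a cylinder or a vertex set $[v]$, and for $v\in NW_G$ let $P_v(\phi)$ denote the value of (\ref{c24}) computed at $v$. The basic dictionary is that a period-$n$ point $y$ with $s(y)=v$ is exactly the periodic extension $\overline{w}$ of a loop $w=w_1\cdots w_n$ of length $n$ based at $v$, with $\phi_n(\overline{w}) = \sum_{j=0}^{n-1}\phi(\sigma^j\overline{w})$. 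I also record the standing fact, used repeatedly, that since $G$ is row-finite each cylinder $Z(\mu)$ is compact (it is the inverse limit of the finite sets of length-$k$ extensions of $\mu$), so $\phi$ is bounded on it.

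First I would prove independence of the vertex. Fix $u,v\in NW_G$; since $NW_G$ is strongly connected, choose finite paths $\alpha$ from $u$ to $v$ and $\beta$ from $v$ to $u$ inside $NW_G$, of lengths $a,b$. Then $w\mapsto\alpha w\beta$ is an injection from loops of length $n$ at $v$ into loops of length $a+n+b$ at $u$. The crux is the weight comparison
\[
\left|\phi_{a+n+b}\!\left(\overline{\alpha w\beta}\right)-\phi_n\!\left(\overline{w}\right)\right|\ \le\ C_\alpha+C_\beta+\sum_{k=1}^{n}\var_k(\phi),
\]
where $C_\alpha,C_\beta$ bound the contributions of the fixed $\alpha$- and $\beta$-segments (finite because the cylinders $Z(\alpha_{i+1}\cdots\alpha_a)$ and $Z(\beta_{i+1}\cdots\beta_b)$ are compact), and the sum arises from comparing, for $0\le j\le n-1$, the points $\sigma^{a+j}\overline{\alpha w\beta}$ and $\sigma^j\overline{w}$, which share their first $n-j$ coordinates, so the $j$-th difference is at most $\var_{n-j}(\phi)$. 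Summing over $w$ gives $Z_{a+n+b}(u)\ge e^{-C_\alpha-C_\beta-\sum_{k\le n}\var_k(\phi)}\,Z_n(v)$. Applying $\tfrac{1}{a+n+b}\log(\cdot)$ and $\limsup_n$, and using that $a+n+b$ runs through a cofinal set of integers (so the subsequential limsup is the full one), that $\tfrac{n}{a+n+b}\to1$, and that $\tfrac1n\sum_{k\le n}\var_k(\phi)\to0$, yields $P_u(\phi)\ge P_v(\phi)$. By symmetry $P_u(\phi)=P_v(\phi)$, which is the first assertion.

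For the cylinder statement, fix a finite path $\mu$ in $NW_G$ of length $\ell$ and put $u=s(\mu)$, $v=r(\mu)$, both in $NW_G$. Since $1_{Z(\mu)}\le 1_{[u]}$, we get at once $\limsup_n\tfrac1n\log Z_n(Z(\mu))\le P_u(\phi)=P_{NW_G}(\phi)$. For the reverse inequality choose, by irreducibility, a path $\gamma$ from $v$ to $u$ in $NW_G$ of length $c$; then $\rho\mapsto\mu\rho\gamma$ injects loops of length $m$ at $v$ into loops of length $\ell+m+c$ at $u$ that begin with $\mu$, hence into the sum defining $Z_{\ell+m+c}(Z(\mu))$. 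The same comparison as above (now $\mu$ and $\gamma$ supply the finite boundary constants $C_\mu,C_\gamma$, and the interior $\rho$-block contributes $\sum_{k\le m}\var_k(\phi)$) gives $Z_{\ell+m+c}(Z(\mu))\ge e^{-C_\mu-C_\gamma-\sum_{k\le m}\var_k(\phi)}\,Z_m(v)$, and passing to $\tfrac{1}{\ell+m+c}\log(\cdot)$ and $\limsup_m$ yields $\limsup_n\tfrac1n\log Z_n(Z(\mu))\ge P_v(\phi)=P_{NW_G}(\phi)$. Combining the two inequalities gives the second assertion.

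The main obstacle is exactly the weight comparison, i.e. showing that inserting the fixed connecting paths alters the Birkhoff sum by $o(n)$. The boundary contributions are harmless once one uses row-finiteness to see that cylinders are compact and $\phi$ therefore bounded on each fixed cylinder; the delicate part is the ``seam'', where the transported loop departs from the periodic pattern, and this is precisely what is absorbed into the accumulated variation $\sum_{k=1}^{n}\var_k(\phi)$. That this is $o(n)$ follows by the Cesàro argument from $\var_k(\phi)\to0$, which is where the hypothesis is essential and which is what replaces the primitivity and Walters assumptions underlying \cite{Sa2}; irreducibility of $NW_G$ enters only in guaranteeing the connecting paths $\alpha,\beta,\gamma$.
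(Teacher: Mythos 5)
Your architecture is the same as the paper's---transport loops between base points with connecting paths inside $NW_G$ and control the change in the Birkhoff sums via the variation hypothesis---but the key estimate, as you state it, does not follow from the stated hypothesis. In your seam bound the $j$-th comparison is controlled by $\var_{n-j}(\phi)$, so your total error contains the low-order terms $\var_1(\phi),\var_2(\phi),\dots$. The hypothesis $\lim_{k\to\infty}\var_k(\phi)=0$ only guarantees $\var_k(\phi)\le\epsilon$ for all sufficiently large $k$; since $k\mapsto\var_k(\phi)$ is decreasing it says nothing about $\var_1(\phi)$, which can be infinite even for a continuous, locally constant potential on a cofinal row-finite graph (a vertex of $NW_G$ may have infinite in-degree, and the oscillation of $\phi$ on the $1$-cylinders $Z(e)$, as $e$ runs over the infinitely many edges ending there, need not be bounded). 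In that situation $\sum_{k=1}^{n}\var_k(\phi)=\infty$ and the Ces\`aro step collapses. Worse, the problem is not merely a crude estimate: if $w$ is a loop of length $n$ whose last edge $e$ satisfies $\operatorname{osc}\left(\phi,Z(e)\right)\asymp n$, then $\phi\left(\sigma^{n-1}\overline{w}\right)$ and $\phi\left(\sigma^{a+n-1}\overline{\alpha w\beta}\right)$ really can differ by $\asymp n$, so the injection $w\mapsto\alpha w\beta$ can distort individual weights by $e^{\Theta(n)}$ and your comparison then only yields $P_u(\phi)\ge P_v(\phi)-c$ for some fixed $c>0$, not the equality.

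The paper's proof is built precisely to avoid this. It first fixes $\epsilon>0$ and a single $k$ with $\var_k(\phi)\le\epsilon$, and then pads the transported loop with $k$ extra symbols of the original orbit: the image of a period-$n$ point $y\in[w]$ is $\left(\mu p\, y_{[1,n+k]}\, q_i\right)^{\infty}$, where the return path $q_i$ depends on which vertex $w_i=r(y_k)$ is reached after $k$ steps---hence the partition of $\left\{y\in[w]:\sigma^n(y)=y\right\}$ into the sets $M_i$ and the finitely many return paths $q_1,\dots,q_l$. With this padding every one of the $n$ comparisons takes place between points agreeing in at least $k$ coordinates, giving the uniform bound $e^{\phi_n(y)}\le e^{n\epsilon}e^{\phi_n\left(\sigma^{|\mu p|}(\chi_i(y))\right)}$ with no reference to $\var_1,\dots,\var_{k-1}$. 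Your argument becomes correct if you either add the hypothesis $\var_1(\phi)<\infty$ (in which case the last $k-1$ seam terms contribute $O(1)$ and you do not even need the Ces\`aro averaging) or incorporate the padding by $y_{[1,n+k]}$ and the attendant case split over $r(y_k)$.
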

\begin{proof} Let $v = s(\mu)$. Then
$$
\sum_{\sigma^n(y) = y}
e^{\phi_n(y)} 1_{Z(\mu)}(y)  \leq \sum_{\sigma^n(y) = y}
e^{\phi_n(y)} 1_{[v]}(y)
$$ 
for all $n$, and hence
\begin{equation}\label{c20}
 \limsup_n \frac{1}{n} \log \sum_{\sigma^n(y) = y}
e^{\phi_n(y)} 1_{Z(\mu)}(y)  \ \leq  \ \limsup_n \frac{1}{n} \log \sum_{\sigma^n(y) = y}
e^{\phi_n(y)} 1_{[v]}(y) .
\end{equation}
Let $w $ be any vertex in $NW_G$ and let $\epsilon > 0$. Since $NW_G$ is irreducible, there is a finite path $p$ in $G$ such that $s(p) = r(\mu), \ r(p)
= w$. By
assumption there is a $ k \in \mathbb N$ such that $\var_k(\phi) \leq
\epsilon$. Let $w_1,w_2, \cdots, w_l$ be the vertexes that can be reached from
$w$ by a path of length $k$. For each $w_i$ we choose a finite path
$q_i$ such that $s(q_i) = w_i, \ r(q_i) = s(\mu)$. Let $n > k$ and set
$$
M_i = \left\{y \in [w]: \ \sigma^n(y) = y, \ r(y_k) = w_i \right\},
$$
and define $\chi_i : M_i \to \left\{y \in Z(\mu) : \ \sigma^{n+L_i}(y)
  = y \right\}$, where $L_i = |\mu p| + |q_i|+ k$, such that
$\chi_i(y)$ is the infinite path which repeats the loop starting with
$\mu$, followed by $p$, the first $n+k$ edges in $y$ and ends with
$q_i$. In symbols, 
$$
\chi_i(y) = \left(\mu py_{[1,n+k]}q_i\right)^{\infty}  .
$$
By using that $\var_k(\phi) \leq \epsilon$ we find that
$$
e^{\phi_n(y)} \leq e^{n \epsilon} e^{\phi_n\left(\sigma^{|\mu p|}\left(\chi_i(y)\right)\right)} .
$$
By comparing $\phi_n\left(\sigma^{|\mu p|}\left(\chi_i(y)\right)\right)$ to
  $\phi_{n+L_i}\left(\chi_i(y)\right)$ one sees that
$$
 e^{\phi_n\left(\sigma^{|\mu p|}\left(\chi_i(y)\right)\right)} \leq
 C_ie^{\phi_{n+L_i}\left(\chi_i(y)\right)},
$$
where 
$$
C_i = \sup \left\{ e^{-\phi_{|q_i|+k}(z)} : \ z \in
  [w] \right\} \cdot \sup \left\{
  e^{-\phi_{|\mu p|}(z)} : \ z \in Z(\mu) \right\} .
$$
Since $\left\{y \in [w] : \ \sigma^n(y) = y\right\} = \sqcup_{i=1}^l
M_i$ and each $\chi_i$ is injective, this leads to the estimate
$$
\sum_{ \sigma^n(y) = y} e^{\phi_n(y)}1_{[w]}(y) \ \leq \ Ce^{n\epsilon} \sum_{i=1}^l
\ \ \sum_{\sigma^{n+L_i}(y) = y } e^{\phi_{n+L_i}(y)}1_{Z(\mu)}(y) ,
$$
where $C = \max_{1 \leq i \leq l} C_i$, and we conclude therefore that
$$
\limsup_n \frac{1}{n} \log \sum_{\sigma^n(y) = y}
e^{\phi_n(y)}1_{[w]}(y) \leq  \epsilon + \limsup_n \frac{1}{n} \log \sum_{\sigma^n(y) = y}
e^{\phi_n(y)}1_{Z(\mu)}(y) .
$$
Since $\epsilon > 0$ was arbitrary we can combine with (\ref{c20}) to
get
\begin{equation*}\label{c23}
\begin{split}
&\limsup_n \frac{1}{n} \log \sum_{\sigma^n(y) = y}
e^{\phi_n(y)}1_{[w]}(y) \leq  \limsup_n \frac{1}{n} \log \sum_{\sigma^n(y) = y}
e^{\phi_n(y)} 1_{Z(\mu)}(y)  \\
& \ \ \ \ \ \ \ \ \ \ \  \ \leq  \limsup_n \frac{1}{n} \log \sum_{\sigma^n(y) = y}
e^{\phi_n(y)} 1_{[v]}(y).
\end{split}
\end{equation*}
The desired conclusion follows from this by using the freedom in the choice of $\mu,v$ and $w$.

\end{proof}

\begin{prop}\label{c5} Let $X = \mathcal P(G)$ be the space of infinite paths
  in a cofinal graph $G$,
  and let $\sigma$ be the left shift on $\mathcal P(G)$. Let $\phi :
  \mathcal P(G) \to
  \mathbb R$ be a continuous function such that $\lim_{k \to \infty}
  \var_k(\phi) = 0$. Then
\begin{enumerate}
\item[a)] $\mathbb P(\phi) = -\infty$ when $NW_G = \emptyset$, and
\item[b)] $\mathbb P(\phi)$ is the Gurevich pressure $P_{NW_G}(\phi)$ of
  $\phi|_{\mathcal P(NW_G)}$ when
  $NW_G \neq \emptyset$.
\end{enumerate}
\end{prop}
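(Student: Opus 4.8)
The plan is to compute $\mathbb P(\phi) = \sup_{x} \mathbb P_x(\phi)$ by evaluating the local pressures on indicator functions of cylinders: each $Z(\mu)$ is compact (the graph is row-finite) and clopen, so $1_{Z(\mu)} \in C_c(X)^+$, and by Corollary \ref{b28} the value of $\mathbb P_x(\phi)$ does not depend on which such test function is used. I would first record two structural consequences of cofinality. When $NW_G \neq \emptyset$ it contains a loop, hence an infinite path all of whose vertices lie in $NW_G$; applying cofinality to this path shows that \emph{every} vertex of $G$ connects to $NW_G$. Together with the irreducibility of $NW_G$ this forces that there are \emph{no edges leaving} $NW_G$: an edge from $NW_G$ into its complement could be continued (every vertex reaches $NW_G$) back into $NW_G$, and closing up inside the irreducible graph $NW_G$ would produce a loop through the complement, a contradiction. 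Thus $NW_G$ is forward invariant, and since every periodic point traces a loop, all periodic orbits lie in $NW_G$.

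For the lower bound in b) I would exhibit a single good point. Choose $v \in NW_G$, a loop $\ell$ based at $v$, and set $x = \ell^{\infty}$, using $1_{[v]}$ (here $[v]$ is compact since $v$ emits finitely many edges). Then $L_\phi^n(1_{[v]})(x)$ is the sum of $e^{\phi_n(\nu x)}$ over loops $\nu$ at $v$ of length $n$, and $\nu \mapsto \nu^{\infty}$ is a bijection onto the periodic points counted by $\sum_{\sigma^n(y)=y} e^{\phi_n(y)} 1_{[v]}(y)$. Since $\nu x$ and $\nu^{\infty}$ agree on their first $n$ coordinates, the hypothesis $\var_k(\phi) \to 0$ gives $|\phi_n(\nu x) - \phi_n(\nu^\infty)| \leq \sum_{k=1}^n \var_k(\phi)$, which is $o(n)$ by Ces\`aro. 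Dividing by $n$ and taking $\limsup$ yields $\mathbb P_x(\phi) \geq P_{NW_G}(\phi)$, hence $\mathbb P(\phi) \geq P_{NW_G}(\phi)$.

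For the upper bound in b) I would bound $\mathbb P_x(\phi)$ for \emph{every} $x$. Fix a loop $\mu_0$ at some $v_0 \in NW_G$ and use $1_{Z(\mu_0)}$. The preimages of $x$ lying in $Z(\mu_0)$ correspond to paths $\nu$ of length $n$ from $v_0$ to $s(x)$ beginning with $\mu_0$; by forward invariance these stay in $NW_G$, so either $s(x) \notin NW_G$ and the sum is $0$ (whence $\mathbb P_x(\phi) = -\infty$), or $s(x) \in NW_G$. In the latter case irreducibility supplies a \emph{fixed} return path $\rho$ from $s(x)$ to $v_0$ of length $d$, and $\nu \mapsto (\nu\rho)^{\infty}$ is an injection into the periodic points through $v_0$. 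Comparing weights as before introduces the error $\sum_{k=1}^n \var_k(\phi)$ together with the term $\phi_d$ evaluated on the compact cylinder $Z(\rho)$, where it is bounded; both are $o(n)$ after division by $n$. This gives $L_\phi^n(1_{Z(\mu_0)})(x) \leq e^{o(n)} \sum_{\sigma^{n+d}(y)=y} e^{\phi_{n+d}(y)} 1_{[v_0]}(y)$, whence $\mathbb P_x(\phi) \leq P_{NW_G}(\phi)$ after re-indexing $m=n+d$ and invoking Lemma \ref{c19}. Taking the supremum over $x$ completes b).

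Finally, for a) I would argue that when $NW_G = \emptyset$ the graph is acyclic, and establish the decisive fact that for any two vertices $a,b$ the lengths of paths from $a$ to $b$ are \emph{bounded}; granting this, $L_\phi^n(1_{Z(\mu_0)})(x) = 0$ for all large $n$ and every $x$, so $\mathbb P_x(\phi) = -\infty$ and $\mathbb P(\phi) = -\infty$. I expect this boundedness to be the main obstacle. The plan is to argue by contradiction: unbounded path lengths from $a$ to $b$ force the set of vertices that are reachable from $a$ and can reach $b$ to be infinite, so the (finitely branching, by row-finiteness) tree of finite paths from $a$ inside this set is infinite, and K\"onig's lemma produces an infinite path $a = u_0 \to u_1 \to \cdots$ each of whose vertices still reaches $b$. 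Applying cofinality to this infinite path and the vertex $b$ then produces a path from $b$ to some $u_i$; together with the path from $u_i$ to $b$ this closes a cycle, contradicting acyclicity. This interplay between K\"onig's lemma and cofinality is the delicate point, since it is exactly where the hypotheses are needed to rule out transient exponential growth.
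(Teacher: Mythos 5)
Your overall architecture is the paper's: compare $L^n_{\phi}$ of a cylinder indicator at $x$ with periodic-orbit sums via the variation hypothesis, then invoke Lemma \ref{c19}; and your preliminary observation that cofinality forces $NW_G$ to be forward invariant (so $x \in \mathcal P(NW_G)$ iff $s(x) \in NW_G$, and all preimage paths launched from $NW_G$ stay there) is a correct and welcome justification of a step the paper leaves implicit. However, there is one genuine gap in your error estimate. You bound $\left|\phi_n(\nu x) - \phi_n(\nu^{\infty})\right| \leq \sum_{k=1}^{n} \var_k(\phi)$ and declare this $o(n)$ by Ces\`aro. The hypothesis is only $\lim_{k\to\infty}\var_k(\phi)=0$, which permits $\var_k(\phi)=+\infty$ for small $k$ (e.g.\ $\phi(x)=f(x_2)$ with $f$ unbounded over the edge set is continuous, has $\var_2(\phi)=0$, but $\var_1(\phi)=\infty$ on an infinite graph), so your sum can be identically $+\infty$. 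The bad terms are unavoidable in your setup: $\nu x$ and $\nu^{\infty}$ (resp.\ $(\nu\rho)^{\infty}$) agree only in their first $n$ coordinates, so after applying $\sigma^{n-j}$ they agree in only $j$ coordinates and the $j$-th term from the end is controlled only by $\var_j(\phi)$. Lengthening your fixed base cylinder does not repair this, because the comparison points still separate at coordinate $n+1$. The paper's device is designed exactly for this: it takes $\mu$ to be the length-$k$ \emph{initial segment of $x$ itself} with $\var_k(\phi)\leq\epsilon$, so that the bijection between $\sigma^{-n}(x)\cap Z(\mu)$ and the period-$n$ points of $Z(\mu)$ matches points agreeing in their first $n+k$ coordinates; every term of the telescoping sum is then $\leq\var_k(\phi)\leq\epsilon$, giving the estimate (\ref{h61}) with total error $n\epsilon$. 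You should adopt this choice of cylinder (Corollary \ref{b28} lets you switch test functions freely, and Lemma \ref{c19} lets you switch back to $[v]$ on the periodic-orbit side), after which both your bounds go through; as a bonus it makes your map an exact bijection, so no return path $\rho$ and no injectivity bookkeeping are needed.

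For part a) your K\"onig's-lemma argument (unbounded path lengths from $a$ to $b$ force an infinite set of intermediate vertices, hence by row-finiteness an infinite ray each of whose vertices reaches $b$, and cofinality applied to that ray and the vertex $b$ closes a loop) is correct, but it is far heavier than necessary. With the cylinder $\mu = x_1\cdots x_k$ adapted to $x$, any element of $\sigma^{-n}(x)\cap Z(\mu)$ with $n>k$ is of the form $\nu x$ with $s(\nu)=s(\mu)=s(x)$ and $r(\nu)=s(x)$, i.e.\ $\nu$ is a loop at $s(x)$; so $NW_G=\emptyset$ makes these sets empty outright and $\mathbb P_x(\phi)=-\infty$ for every $x$ with no further work. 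The same one-line observation also replaces your forward-invariance argument in the case $s(x)\notin NW_G$ of the upper bound.
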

\begin{proof} Let $x \in \mathcal P(G)$ and let $\epsilon > 0$ be
  arbitrary. Choose $k \in \mathbb N$ such that $\var_k(\phi) \leq
  \epsilon$ and let $\mu$ be the path of length $k$ with $x \in
  Z(\mu)$. If $x \notin \mathcal P(NW_G)$, the set $\sigma^{-n}(x)
  \cap Z(\mu)$ is empty for all $n$ and hence
  $L^n_{\phi}\left(1_{Z(\mu)}\right)(x) = 0$ for all $n$, leading to
  the conclusion that $\mathbb P_x(\phi) = - \infty$. Assume then that
  $x \in \mathcal P(NW_G)$. Let $n > k$. There is an
obvious bijection $\chi : \sigma^{-n}(x) \cap Z(\mu) \to \left\{ y \in
  Z(\mu) : \sigma^n(y) = y\right\}$ such that $\chi(z)_i = z_i, \ i =
1,2, \cdots, n+k$. Since $\var_k(\phi) \leq \epsilon$ this leads first
to the estimates 
\begin{equation}\label{h61}
\sum_{\sigma^n(y)=y} e^{\phi_n(y)- n\epsilon}1_{Z(\mu)}(y) \leq L^n_{\phi}\left(
  1_{Z(\mu)} \right)(x) \leq  \sum_{ \sigma^n(y)=y} e^{\phi_n(y)+
  n\epsilon}1_{Z(\mu)}(y),   
\end{equation}
and then by Lemma \ref{c19} to the conclusion that $P_{NW_G}(\phi) -
\epsilon \leq \mathbb P_x(\phi) \leq P_{NW_G}(\phi)+\epsilon$. Hence
$$
\mathbb P_x(\phi) = \begin{cases} - \infty, &  \ x \notin \mathcal
  P(NW_G) \\ P_{NW_G}(\phi) , & \ x \in \mathcal P(NW_G) . \end{cases}
$$
\end{proof}

%\begin{cor}\label{b29}  Assume that $\sigma$ is cofinal and that there is a non-negative and
%  non-zero function $h \in C_c(X)$ such that 
%$$
%\sum_{n=0}^{\infty} L^n_{\phi}(h)(x) < \infty
%$$
%for all $x \in X$. It follows that
%$\sum_{n=0}^{\infty} \left|L^n_{\phi}(f)(x)\right|$ converges
%for all $f\in C_c(X)$ and all $x \in X$.
%\end{cor}

%\bigskip

%\bigskip

%\bigskip

\section{Constructing conformal measures}

In this section we return to the setting where $X$ is a second
countable locally compact Hausdorff space, $\sigma: X \to X$ is a
cofinal local homeomorphism and $\phi : X \to \mathbb R$ is
continuous. We add the assumption that $(X,\sigma)$
is \emph{non-compact}, in the sense that there is no open non-empty subset $U
\subseteq X$ such that $\bigcup_{n=0}^{\infty}\sigma^n(U)$ is
pre-compact. Since $(X,\sigma)$ is assumed cofinal, this additional
condition is satisfied when there is just a single point $x$ whose
orbit closure $\overline{\bigcup_{k=1}^{\infty} \phi^k(x)}$ is not
compact.

In the following we write $\lim_{k \to \infty} x_k = \infty$
when $\{x_k\}$ is a sequence in $X$ with the property that for any
compact subset $K \subseteq X$ there is an $N \in \mathbb N$ such that
$x_k \notin K \ \forall k \geq N$.

\begin{lemma}\label{a69} Assume that $(X,\sigma)$ is cofinal and 
  non-compact. Let $h \in C_c(X)^+$. There is a sequence $\{x_k\}$ in $X$ such that
$$
\sum_{n=0}^{\infty} L_{\phi}^n(h)(x_k)  > 0
$$
for all $k$ and $\lim_{x \to \infty} x_k = \infty$.
\end{lemma}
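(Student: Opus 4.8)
The plan is to reduce the assertion to a purely topological statement about the open set on which $h$ is positive. Put $U = \{x \in X : h(x) > 0\}$, an open non-empty subset of $X$. Since $h \geq 0$ and $e^{\phi_n(y)} > 0$ for every $y$, iterating the definition (\ref{a47}) gives $L^n_{\phi}(h)(x) = \sum_{y \in \sigma^{-n}(x)} e^{\phi_n(y)} h(y)$, and this quantity is strictly positive if and only if $\sigma^{-n}(x)$ meets $U$, i.e. if and only if $x \in \sigma^n(U)$. Consequently
$$
\sum_{n=0}^{\infty} L^n_{\phi}(h)(x) > 0 \iff x \in S := \bigcup_{n=0}^{\infty} \sigma^n(U) .
$$
It therefore suffices to produce a sequence $\{x_k\}$ in $S$ with $\lim_{k \to \infty} x_k = \infty$. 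By the non-compactness hypothesis applied to the open set $U$, the set $S$ is not pre-compact, i.e. $\overline{S}$ is not compact.

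Next I would extract the escaping sequence from $S$ using that $X$ is locally compact, second countable and Hausdorff, hence $\sigma$-compact and admitting an exhaustion $X = \bigcup_{j=1}^{\infty} K_j$ by compact sets with $K_j \subseteq \operatorname{int} K_{j+1}$ for all $j$. If $S$ were contained in some $K_{j_0}$, then $\overline{S} \subseteq K_{j_0}$ would be compact, contradicting the previous paragraph; hence $S \setminus K_j \neq \emptyset$ for every $j$, and I may choose $x_k \in S \setminus K_k$ for each $k$.

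Finally I would verify that this sequence escapes to infinity in the sense defined above. Given any compact $K \subseteq X$, the open sets $\operatorname{int} K_j$ cover $K$, so by compactness $K \subseteq K_{j_0}$ for some $j_0$; since the exhaustion is increasing, $x_k \in S \setminus K_k \subseteq X \setminus K_{j_0} \subseteq X \setminus K$ for all $k \geq j_0$, which is exactly $\lim_{k \to \infty} x_k = \infty$. As each $x_k$ lies in $S$, the series $\sum_{n=0}^{\infty} L^n_{\phi}(h)(x_k)$ is positive, which is the assertion. The only non-formal input is the reduction in the first paragraph, which converts the analytic positivity condition into membership in the forward saturation $S$ of $U$; after that the conclusion is a routine consequence of second countability, which supplies the compact exhaustion needed to manufacture a sequence leaving every compact set. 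I expect no serious obstacle here, since the non-compactness hypothesis is designed precisely to make $S$ fail to be pre-compact.
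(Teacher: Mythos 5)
Your argument is correct and is essentially the paper's own proof: both set $U=\{x: h(x)>0\}$, observe that $\sum_{n}L^n_{\phi}(h)(x)>0$ exactly when $x\in\bigcup_{n\geq 0}\sigma^n(U)$, and use the non-compactness hypothesis to pick points of this set outside each member of an exhaustion of $X$ by pre-compact sets. The only difference is that you spell out the equivalence and the exhaustion argument in more detail than the paper does.
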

\begin{proof} Set $U = \left\{ x \in X :  \ h(x) > 0 \right\}$ and let
  $V_1 \subseteq V_2 \subseteq V_3 \subseteq \cdots$ be a sequence of
  open pre-compact subsets in $X$ such that $X = \bigcup_k V_k$. Since
  $(X,\sigma)$ is non-compact there is for every $k \in \mathbb N$ an element
$$
x_k \in \left( \bigcup_{n=0}^{\infty} \sigma^n\left(U\right)
  \backslash V_k \right).
$$
The sequence $\{x_k\}$ has the stated properties.
\end{proof}

A sequence $\{x_k\}$ with the properties stipulated in Lemma \ref{a69}
will be called \emph{$h$-diverging}.

\begin{lemma}\label{a16} Assume that $(X,\sigma)$ is cofinal and
  non-compact, and that 
$$
\sum_{n=0}^{\infty} \left|L_{\phi}^n(f)(x)\right| < \infty
$$
for all $x \in X$ and all $f \in C_c(X)$. Let $h \in C_c(X)^+$. For every $h$-diverging sequence $\{x_k\} \subseteq X$ there is a sub-sequence
$\left\{x_{k_i}\right\}$ such that the limit 
\begin{equation}\label{a71}
\lim_{i \to \infty} \ \frac{\sum_{n=0}^{\infty} L_{\phi}^n(f)(x_{k_i})}{\sum_{n=0}^{\infty}
L_{\phi}^n(h)(x_{k_i})}  
\end{equation}
exists for all $f \in C_c(X)$.
\end{lemma}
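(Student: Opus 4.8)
The plan is to read each quotient in (\ref{a71}) as the value of a linear functional on $C_c(X)$ and then extract a pointwise convergent subsequence by a uniform boundedness estimate followed by a diagonal argument. Since $\{x_k\}$ is $h$-diverging, the denominators $D_k := \sum_{n=0}^{\infty} L_{\phi}^n(h)(x_k)$ are strictly positive, and they are finite by the standing summability hypothesis; hence
$$
\Lambda_k(f) \ := \ \frac{\sum_{n=0}^{\infty} L_{\phi}^n(f)(x_k)}{D_k}
$$
defines, for each $k$, a well-defined linear functional $\Lambda_k : C_c(X) \to \mathbb R$ (with $\Lambda_k(h) = 1$). The whole point is to show that $\{\Lambda_k\}$ admits a subsequence converging at every $f \in C_c(X)$.

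First I would prove that $\{\Lambda_k\}$ is uniformly bounded on $C_K(X)$, the space of functions in $C_c(X)$ with support in a fixed compact set $K$, normed by $\|\cdot\|$. Apply Lemma \ref{b26} with $g = h$: there are $N \in \mathbb N$ and $C > 0$, depending only on $K$ (and on $h,\phi$), so that $\left|L_{\phi}^n(f)(x)\right| \leq C\|f\| \sum_{1 \leq i,j \leq N} L_{\phi}^{n+j-i}(h)(x)$ for every $f \in C_K(X)$, every $x \in X$ and every $n > N$. Summing over $n > N$ and using that all the terms are non-negative, so that for each fixed pair $(i,j)$ one has $\sum_{n>N} L_{\phi}^{n+j-i}(h)(x) \leq D_x := \sum_{m=0}^{\infty} L_{\phi}^m(h)(x)$, the tail is controlled by
$$
\sum_{n > N} \left|L_{\phi}^n(f)(x)\right| \ \leq \ C N^2 \|f\| \, D_x .
$$
It remains to dispose of the finitely many head terms $n = 0,1,\dots,N$, and this is the delicate point. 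Here I would exploit that $x_k \to \infty$: since $L_{\phi}^n(f)$ is continuous and vanishes off $\sigma^n(K)$, the set $K' := \bigcup_{n=0}^{N} \sigma^n(K)$ is compact and independent of $f$, and there is a $k_0$, depending only on $K$, with $x_k \notin K'$ for all $k \geq k_0$. For such $k$ every head term $L_{\phi}^n(f)(x_k)$ vanishes, whence $|\Lambda_k(f)| \leq C N^2 \|f\|$; the remaining finitely many functionals $\Lambda_k$, $k < k_0$, are bounded on $C_K(X)$ by the summability hypothesis (dominating $f$ by $\|f\|\,\chi$ for a fixed $\chi \in C_c(X)^+$ with $\chi \geq 1$ on $K$). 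Hence $\sup_k \big\|\Lambda_k|_{C_K(X)}\big\| < \infty$.

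With these uniform bounds I would finish by a routine diagonal extraction. Choose an exhaustion $X = \bigcup_m K_m$ by compact sets with $K_m \subseteq \INT K_{m+1}$, so that every $f \in C_c(X)$ lies in some $C_{K_m}(X)$. Each $C_{K_m}(X)$ is a subspace of the separable Banach space $C(K_m)$ — separable because $X$, being second countable, locally compact and Hausdorff, is metrizable — and the restrictions of $\{\Lambda_k\}$ to $C_{K_m}(X)$ are uniformly bounded by the previous step. By weak-$*$ sequential compactness of norm-bounded sets in the dual of a separable Banach space, I can pass to a subsequence along which $\Lambda_k$ converges pointwise on $C_{K_1}(X)$, refine it to converge on $C_{K_2}(X)$, and so on; the diagonal subsequence $\{x_{k_i}\}$ then makes $\lim_i \Lambda_{k_i}(f)$ exist for every $f \in \bigcup_m C_{K_m}(X) = C_c(X)$, which is precisely the claim. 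The main obstacle is the uniform boundedness estimate, and within it the head terms $n \leq N$ that Lemma \ref{b26} does not reach; the observation that saves the argument is that these terms are evaluated at points $x_k$ which eventually leave the fixed compact set $\bigcup_{n=0}^{N}\sigma^n(K)$ and therefore vanish, after which only standard functional analysis remains.
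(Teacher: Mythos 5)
Your proof is correct and follows essentially the same route as the paper: the paper likewise combines Lemma \ref{b26} with the observation that for $x$ outside $\bigcup_{j=0}^{N}\sigma^j(\overline{V})$ the finitely many head terms vanish, obtaining the uniform bound $\bigl|\sum_n L_\phi^n(f)(x_k)\bigr| \leq C N^2 \|f\| \sum_n L_\phi^n(h)(x_k)$ for large $k$, and then runs a diagonal extraction over a countable dense family in each $C_0(V_i)$ — which is exactly the weak-$*$ sequential compactness argument you invoke, unpacked. No gaps.
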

\begin{proof} Let $V_1 \subseteq V_2 \subseteq V_3 \subseteq \dots$ be
  a sequence of
  open relatively compact sets such that $X = \bigcup_i V_i$. Then
\begin{equation}\label{a62}
C_c(X) = \bigcup_i C_0(V_i),
\end{equation}
where $C_0(V_i)$ denotes the Banach space of continuous functions on
$V_i$ that vanish at infinity.
Since $C_0(V_i)$ is separable there is a sequence $\{g_n\}
\subseteq C_c(X)$ such that $ \left\{g_n\right\} \cap
  C_0(V_i) $ is dense in $C_0(V_i)$ for all $i$. It follows from Lemma
  \ref{b26} that for each $i$ there are $N_i \in \mathbb N$ and $C_i >0$
  such that 
\begin{equation*}\label{c39} 
\left|\sum_{n=0}^{\infty} L^n_{\phi}(f)(x)\right| \leq C_iN_i^2\left\|f\right\|
\sum_{n=0}^{\infty} L^n_{\phi} (h)(x) 
\end{equation*}
for all $f \in C_0(V_i)$ and all $x \notin \bigcup_{j=0}^{N_i}
\sigma^j\left(\overline{V_i}\right)$. Since $\lim_{k \to \infty} x_k =
\infty$ this implies that
\begin{equation}\label{a70}
 \left|\frac{\sum_{n=0}^{\infty} L_{\phi}^n(f)(x_k)}{\sum_{n=0}^{\infty}
L_{\phi}^n(h)(x_k)} \right| \leq C_iN_i^2 \left\|f\right\|
\end{equation}
for all $f \in C_0(V_i)$ and all sufficiently large $k \in \mathbb
N$. A diagonal sequence argument shows that there is a
sub-sequence $\{x_{k_i}\}$ such that 
$$
\lim_{i \to \infty} \frac{\sum_{n=0}^{\infty} L_{\phi}^n(g_j)(x_{k_i})}{\sum_{n=0}^{\infty}
L_{\phi}^n(h)(x_{k_i})}  
$$
exists for all $j$. Let $f \in C_0(V_i)$. It follows from (\ref{a70}) that
\begin{equation*}\label{c40}
\begin{split}
&\left|\frac{\sum_{n=0}^{\infty} L_{\phi}^n(g_j)(x_{k_l})}{\sum_{n=0}^{\infty}
L_{\phi}^n(h)(x_{k_l})}  \ - \ \frac{\sum_{n=0}^{\infty} L_{\phi}^n(f)(x_{k_l})}{\sum_{n=0}^{\infty}
L_{\phi}^n(h)(x_{k_l})}\right|  = \left|\frac{\sum_{n=0}^{\infty} L_{\phi}^n(g_j-f)(x_{k_l})}{\sum_{n=0}^{\infty}
L_{\phi}^n(h)(x_{k_l})}\right| \\
&\leq C_iN_i^2 \left\|g_j -f\right\|
\end{split}
\end{equation*}
for all large $l$ when $g_j \in C_0(V_i)$. Since $\left\{g_n\right\}
\cap C_0(V_i)$ is dense in $C_0(V_i)$ it  follows from this, in
combination with
(\ref{a62}), that the limit (\ref{a71})
%$$
%\lim_{i \to \infty} \frac{\sum_{n=0}^{\infty} L_{\phi}^n(f)(x_{k_i})}{\sum_{n=0}^{\infty}
%L_{\phi}^n(h)(x_{k_i})}  
%$$
exists for all $f \in C_c(X)$.
\end{proof}

\begin{lemma}\label{a17}   Assume that $(X,\sigma)$ is cofinal and
  non-compact, and that $\mathbb P(\phi) < 0$. Let $h \in  C_c(X)^+$. There
  is a regular Borel measure $m$ on $X$ such that $L_{\phi}^*(m) = m$
  and $\int_X h \ dm = 1$.
\end{lemma}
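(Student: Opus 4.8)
The plan is to realize $m$ as a weak-$*$ limit of normalised potential kernels along a sequence escaping to infinity, and to exploit the cancellation produced by the shift $f \mapsto L_\phi(f)$ under such a limit. First I would record that the hypothesis $\mathbb P(\phi) < 0$ places us in the setting of Lemma \ref{a16}. Indeed, for every $x \in X$ and every $f \in C_c(X)^+$ we have $\mathbb P_x(\phi) \leq \mathbb P(\phi) < 0$, so $\limsup_n \bigl(L^n_\phi(f)(x)\bigr)^{1/n} = e^{\mathbb P_x(\phi)} < 1$, and the root test gives $\sum_{n=0}^{\infty} L^n_\phi(f)(x) < \infty$. Writing an arbitrary $f \in C_c(X)$ as a difference of elements of $C_c(X)^+$ dominated by $|f|$ yields $\sum_{n=0}^{\infty} \bigl|L^n_\phi(f)(x)\bigr| < \infty$ for all $x \in X$ and all $f \in C_c(X)$, which is precisely the standing assumption of Lemma \ref{a16}. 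With this in hand, Lemma \ref{a69} produces an $h$-diverging sequence $\{x_k\}$, and Lemma \ref{a16} extracts a subsequence $\{x_{k_i}\}$ along which
\[
\Lambda(f) \ := \ \lim_{i \to \infty} \ \frac{\sum_{n=0}^{\infty} L_{\phi}^n(f)(x_{k_i})}{\sum_{n=0}^{\infty} L_{\phi}^n(h)(x_{k_i})}
\]
exists for every $f \in C_c(X)$.

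Next I would check that $\Lambda$ is a positive linear functional on $C_c(X)$: linearity is inherited from the limit, and positivity follows because each $L^n_\phi$ preserves non-negativity while, for an $h$-diverging sequence, the denominators are strictly positive. The Riesz representation theorem then furnishes a regular Borel measure $m$ with $\int_X f \ dm = \Lambda(f)$ for all $f \in C_c(X)$. Taking $f = h$ makes numerator and denominator coincide, so $\int_X h \ dm = 1$; in particular $m \neq 0$.

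It remains to prove $L_\phi^*(m) = m$, i.e. $\int_X L_\phi(f) \ dm = \int_X f \ dm$ for every $f \in C_c(X)$. Using $L_\phi^n \circ L_\phi = L_\phi^{n+1}$ together with the absolute convergence just established, the two relevant numerators telescope:
\[
\sum_{n=0}^{\infty} L_{\phi}^n\bigl(L_\phi(f)\bigr)(x) \ - \ \sum_{n=0}^{\infty} L_{\phi}^n(f)(x) \ = \ \sum_{n=1}^{\infty} L_{\phi}^n(f)(x) \ - \ \sum_{n=0}^{\infty} L_{\phi}^n(f)(x) \ = \ -f(x).
\]
Dividing by the common denominator and passing to the limit gives $\int_X L_\phi(f)\ dm - \int_X f \ dm = \lim_{i\to\infty} \frac{-f(x_{k_i})}{\sum_{n=0}^{\infty} L^n_\phi(h)(x_{k_i})}$. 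Here lies the only delicate point: since $f$ has compact support and $x_{k_i} \to \infty$, we have $f(x_{k_i}) = 0$ for all large $i$, while every denominator is strictly positive, so the limit vanishes and $L_\phi^*(m) = m$.

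I do not expect a genuine obstacle beyond this, since the analytic work — the uniform domination of Lemma \ref{b26} and the diagonal extraction of Lemma \ref{a16} — is already carried out. The one step requiring care is the disappearance of the boundary term $f(x_{k_i})$ in the telescoping identity, which is exactly where the \emph{non-compactness} hypothesis enters, through the escape $x_{k_i} \to \infty$ played off against the compact support of $f$.
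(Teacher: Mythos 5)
Your argument is correct and follows essentially the same route as the paper: deduce summability of $\sum_n L^n_\phi(f)(x)$ from $\mathbb P(\phi)<0$, extract a convergent normalised sequence via Lemmas \ref{a69} and \ref{a16}, apply Riesz, and kill the boundary term $f(x_{k_i})$ in the telescoping identity using the escape to infinity against the compact support of $f$. The paper's proof is just a terser version of the same computation, so there is nothing to add.
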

\begin{proof} It follows from the definition of $\mathbb P(\phi)$ that 
$\sum_{n=0}^{\infty} \left|L_{\phi}^n(f)(x)\right| < \infty$ for all
$x \in X$ and all $f \in C_c(X)$. By Lemma \ref{a16} and Lemma
\ref{a69} there is a sequence
  $\left\{x_k\right\}$ in $X$ such that $\lim_{k \to \infty} x_k =
  \infty$ and the limit
$$
\lim_{k \to \infty} \ \frac{\sum_{n=0}^{\infty} L_{\phi}^n(f)(x_{k})}{\sum_{n=0}^{\infty}
L_{\phi}^n(h)(x_{k})}  
$$
exists for all $f \in C_c(X)$. Riesz' representation theorem provides
us therefore with a regular Borel measure $m$ on $X$ such that
$$
\int_X f \ dm =\lim_{k \to \infty} \ \frac{\sum_{n=0}^{\infty} L_{\phi}^n(f)(x_{k})}{\sum_{n=0}^{\infty}
L_{\phi}^n(h)(x_{k})}  
$$
for all $f \in C_c(X)$. Note that
$$
\int_X f \ dm - \int_X L_{\phi}(f) \ dm = \lim_{k \to \infty} \ \frac{f(x_{k})}{\sum_{n=0}^{\infty}
L_{\phi}^n(h)(x_{k})}   \ = \ 0
$$
for all $f \in C_c(X)$ since $\lim_{k \to \infty} x_k =
  \infty$. 
\end{proof}

\begin{thm}\label{a63}   Assume that $(X,\sigma)$ is cofinal and non-compact,
  and that $\mathbb P(-\phi) \leq 0$. Let $h \in  C_c(X)$ be
non-negative and non-zero. There is a $\phi$-conformal measure $m$ such that $\int_X h \ dm
= 1$.
\end{thm}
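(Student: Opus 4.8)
The plan is to reduce the boundary case $\mathbb P(-\phi)=0$ to the strictly negative case already settled in \reflemma{a17}, via a perturbation-and-limit argument, the point being that a strictly negative pressure is exactly what guarantees the summability needed to run the construction of \reflemma{a16}. For $\epsilon>0$ put $\psi_\epsilon=-\phi-\epsilon$. Adding a constant $c$ to a potential turns $L^n$ into $e^{nc}L^n$ and hence shifts the pressure by $c$, so $\mathbb P(\psi_\epsilon)=\mathbb P(-\phi)-\epsilon\le-\epsilon<0$ for every $\epsilon\in(0,1]$. All the preparatory results (\reflemma{b26}, \reflemma{a16}, \reflemma{a17}) are valid for an arbitrary continuous potential, so applying \reflemma{a17} with potential $\psi_\epsilon$ yields, for each such $\epsilon$, a regular Borel measure $m_\epsilon$ with $L_{\psi_\epsilon}^*(m_\epsilon)=m_\epsilon$ and $\int_X h\,dm_\epsilon=1$. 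By \reflemma{a101} applied to the potential $\phi+\epsilon=-\psi_\epsilon$, each $m_\epsilon$ is $(\phi+\epsilon)$-conformal.

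The heart of the matter is to pass to the limit $\epsilon\to0$ while keeping the limit non-zero and $\phi$-conformal, and the key to this is a bound on $\int_X f\,dm_\epsilon$ that is uniform in $\epsilon$. Fix open relatively compact sets $V_1\subseteq V_2\subseteq\cdots$ with $X=\bigcup_i V_i$. Tracking the dependence of the constants of \reflemma{b26} on the potential, with $K=\overline{V_i}$ and $g=h$, I would note that the integer $N_i$ depends only on $K$, $h$ and cofinality, hence is independent of $\epsilon$, while the constant $C=\delta^{-1}C_1C_2M$ stays bounded as $\epsilon$ ranges over $(0,1]$: since $(\psi_\epsilon)_i(z)=-\phi_i(z)-i\epsilon$, the quantities $e^{-(\psi_\epsilon)_i}$ and $e^{(\psi_\epsilon)_i}$ are dominated, uniformly in $\epsilon\in(0,1]$, by $\epsilon$-free suprema over the relevant compact sets. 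Feeding this into the estimate behind (\ref{a70}) produces a constant $\tilde C_i$, independent of $\epsilon\in(0,1]$, with $\left|\int_X f\,dm_\epsilon\right|\le \tilde C_i\,\|f\|$ for all $f\in C_0(V_i)$.

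With this uniform bound the diagonal argument of \reflemma{a16} applies: there is a sequence $\epsilon_j\to0$ such that $\lim_j\int_X f\,dm_{\epsilon_j}$ exists for every $f\in C_c(X)$, and Riesz' theorem then gives a regular Borel measure $m$ with $\int_X f\,dm=\lim_j\int_X f\,dm_{\epsilon_j}$. Since $h\in C_c(X)$ we get $\int_X h\,dm=\lim_j\int_X h\,dm_{\epsilon_j}=1$, so $m$ is non-zero. To identify $m$ as $\phi$-conformal, observe that $L_{\psi_\epsilon}(f)=e^{-\epsilon}L_{-\phi}(f)$ for all $f\in C_c(X)$, whence the fixed-point identity $\int_X L_{\psi_{\epsilon_j}}(f)\,dm_{\epsilon_j}=\int_X f\,dm_{\epsilon_j}$ reads $e^{-\epsilon_j}\int_X L_{-\phi}(f)\,dm_{\epsilon_j}=\int_X f\,dm_{\epsilon_j}$. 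As $L_{-\phi}(f)\in C_c(X)$ and $e^{-\epsilon_j}\to1$, letting $j\to\infty$ gives $\int_X L_{-\phi}(f)\,dm=\int_X f\,dm$, i.e. $L_{-\phi}^*(m)=m$, so $m$ is $\phi$-conformal by \reflemma{a101}. I expect the main obstacle to be exactly the uniform-in-$\epsilon$ bound of the second paragraph: without it the mass of $m_\epsilon$ could escape to infinity in the vague limit and destroy the normalisation $\int_X h\,dm=1$, leaving only the zero measure.
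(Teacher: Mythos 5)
Your proposal is correct and follows essentially the same route as the paper: perturb the potential by a small constant to force strictly negative pressure, invoke Lemma \ref{a17} to get $(\phi+\epsilon)$-conformal measures normalised against $h$, extract a vague limit using the $\epsilon$-uniform bound coming from Lemma \ref{b26} and (\ref{a70}), and identify the limit via Lemma \ref{a101}. The only (harmless) variation is in the last step, where you verify condition 2) of Lemma \ref{a101} directly through the scaling identity $L_{-\phi-\epsilon}=e^{-\epsilon}L_{-\phi}$, whereas the paper checks the integral identity 3) over injective open sets using the uniform convergence of $ge^{\phi+n_i^{-1}}$ to $ge^{\phi}$; you also spell out the uniformity of the constant in Lemma \ref{b26}, which the paper only flags parenthetically.
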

\begin{proof}  Let $n \in \mathbb N$ and note that $\mathbb P\left(-\phi -
    \frac{1}{n}\right) = \mathbb P(-\phi) - \frac{1}{n} < 0$. Hence Lemma
  \ref{a39} and Lemma \ref{a17} give us a $\phi +
    \frac{1}{n}$-conformal measure $m_n$ for each $n \in \mathbb N$, with
  the additional property that $\int_X h \ dm_n =1$. Let $V_1
  \subseteq V_2 \subseteq \cdots$ be the sets from the proof of Lemma
  \ref{a16}. It follows from the way the
  $m_n$'s were constructed, in particular from (\ref{a70}), that there are numbers $M_k >
  0$, not depending
  of $n$, such that
\begin{equation}\label{d1}
\left|\int_X f \ d m_n \right| \leq M_k\|f\|
\end{equation}
for all $f \in C_0(V_k)$ and all $n$. (It is necessary here to check
that the constant $C$ in Lemma \ref{b26} can be chosen independently
of $n$.) Thus the sequence of linear functionals on $C_0(V_k)$
arising from integration with respect to the $m_n$'s are contained
in the ball of radius $M_k$ in the dual space of $C_0(V_i)$. By
compactness of this ball in the weak$^*$-topology we deduce that the
sequence has a convergent subsequence for each $k$. Combining
(\ref{a62}) with a diagonal sequence argument this leads to the
conclusion that there is a subsequence $\left\{m_{n_i}\right\}$ such
that the limit $\lim_{i \to \infty} \int_X f \ d m_{n_i}$ exists for
all $f \in C_c(X)$. By the Riesz representation theorem this gives us
a regular Borel measure $m$ on $X$ such that
$$
\lim_{i \to \infty} \int_X f \ dm_{n_i} = \int_X f \ dm 
$$   
for all $f\in C_c(X)$. In particular, $\int_X h \ dm =
1$. To check
that $m$ is ${\phi}$-conformal, let $U$ be an open subset of
$X$ such that $\sigma$ is injective on $U$. For each $i$ and each $g
\in C_c(U)$ we have that
$$
\int_{\sigma(U)} g\circ \left(\sigma|_U\right)^{-1}(x) \ dm_{n_i}(x) = \int_U g(x)
e^{\phi(x) + n_i^{-1}} \ dm_{n_i}(x) 
$$
by Lemma \ref{a101}. Since $g \in C_c(V_k)$ for some sufficiently large
$k$ and since $ge^{\phi + n_i^{-1}}$ converges uniformly to
$ge^{\phi}$, it follows from (\ref{d1}) that we can take the limit $i \to \infty$ to find that
$$
\int_{\sigma(U)} g\circ \left(\sigma|_U\right)^{-1}(x) \ dm(x) = \int_U g(x)
e^{\phi} \ dm(x) .
$$
Hence $m$ is ${\phi}$-conformal; again by Lemma \ref{a101}.
\end{proof}

%\bigskip

%\bigskip

%\bigskip

\subsection{Conformal measures for cofinal Markov shifts}\label{MS2}

We return now to the setting of Section \ref{MS1}.

\begin{lemma}\label{c16}  Let $X = \mathcal P(G)$ be the space of infinite paths
  in a cofinal graph $G$,
  and let $\sigma$ be the left shift on $\mathcal P(G)$. Let $\phi :
  \mathcal P(G) \to
  \mathbb R$ be a function such that $\lim_{k \to \infty}
  \var_k(\phi) = 0$. Assume that there is a
  ${\phi}$-conformal
  measure $m$. Then $\mathbb P(-\phi) \leq 0$.
\end{lemma}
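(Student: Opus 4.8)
The plan is to reduce the statement to the Gurevich pressure and then to bound the sum defining it by means of the conformal measure. First I would dispose of the trivial case: if $NW_G=\emptyset$ then $\mathbb P(-\phi)=-\infty$ by \refprop{c5}, and there is nothing to prove. So assume $NW_G\neq\emptyset$. By \refprop{c5} it then suffices to show that the Gurevich pressure satisfies $P_{NW_G}(-\phi)\le 0$, and by \reflemma{c19} I am free to compute this pressure with any vertex $v\in NW_G$, i.e. to bound
\[
S_n\;=\;\sum_{\sigma^n(y)=y} e^{-\phi_n(y)}\,1_{[v]}(y).
\]

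Before the main estimate I would establish that $m([v])>0$ for every $v\in V_G$; this is where cofinality enters and is, I expect, the main obstacle. Since $m\neq 0$ and $\mathcal P(G)=\bigsqcup_{w}[w]$ is the disjoint partition by source vertex, some $[w_0]$ carries positive mass. Positivity then propagates along edges in both directions: writing $[w]=\bigsqcup_{s(e)=w}Z(e)$, observing that $\sigma$ is injective on $Z(e)$ with $\sigma(Z(e))=[r(e)]$, and invoking the conformal identity $m([r(e)])=\int_{Z(e)}e^{\phi}\,dm$ of \refdefn{a38}, one sees that $m([w])>0$ forces $m(Z(e))>0$ and hence $m([r(e)])>0$ for some edge $e$ with $s(e)=w$ (forward), while conversely $m([r(e)])>0$ forces $m(Z(e))>0$ and hence $m([s(e)])>0$ (backward). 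Iterating the forward step from $w_0$ produces an infinite path $q\in\mathcal P(G)$ all of whose vertices $s(q_i)$ carry positive mass; applying cofinality of $G$ to the pair $(v,q)$ yields a finite path from any prescribed $v$ into some vertex of $q$, and the backward step along that path gives $m([v])>0$.

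The main estimate then runs as follows. Iterating the eigenequation $L_{-\phi}^*(m)=m$ of \reflemma{a101} gives $m([v])=\int_X L_{-\phi}^n(1_{[v]})\,dm$ for all $n$, as $1_{[v]}\in C_c(X)$. For $x\in[v]$ the points of $\sigma^{-n}(x)\cap[v]$ are exactly the paths $\omega x$ obtained by prepending a loop $\omega$ of length $n$ at $v$, and each such loop corresponds to the periodic point $\omega^{\infty}$ counted in $S_n$. Since $\omega x$ and $\omega^{\infty}$ agree on their first $n$ coordinates, the hypothesis $\var_k(\phi)\to 0$ yields $\left|\phi_n(\omega x)-\phi_n(\omega^{\infty})\right|\le V_n$ with $V_n=\sum_{k=1}^{n}\var_k(\phi)$, whence
\[
L_{-\phi}^n(1_{[v]})(x)\;=\;\sum_{\omega}e^{-\phi_n(\omega x)}\;\ge\;e^{-V_n}\sum_{\omega}e^{-\phi_n(\omega^{\infty})}\;=\;e^{-V_n}S_n\qquad(x\in[v]).
\]
Integrating over $[v]$ and using $m([v])=\int_X L_{-\phi}^n(1_{[v]})\,dm\ge\int_{[v]}L_{-\phi}^n(1_{[v]})\,dm\ge e^{-V_n}S_n\,m([v])$, I cancel the factor $m([v])\in(0,\infty)$ to get $S_n\le e^{V_n}$, so that $\tfrac1n\log S_n\le\tfrac1n V_n$. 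Since $\var_k(\phi)\to 0$ the Cesàro averages $\tfrac1n V_n$ tend to $0$, and taking $\limsup_n$ gives $P_{NW_G}(-\phi)\le 0$, i.e. $\mathbb P(-\phi)\le 0$ by \refprop{c5}.
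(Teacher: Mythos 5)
Your strategy is essentially the paper's own: reduce to the Gurevich pressure via \refprop{c5} and \reflemma{c19}, identify the periodic-orbit sum with a lower bound for $L^n_{-\phi}(1_C)$ on a cylinder $C$ of positive finite measure, integrate against $m$ using $L^*_{-\phi}(m)=m$, and cancel $m(C)$. Your argument that $m([v])>0$ for every vertex --- propagating positivity forward and backward along edges and then invoking cofinality --- is correct and is a genuine addition, since the paper only asserts positivity of the cylinder measure without proof.

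There is, however, one step that can fail under the stated hypotheses. You bound $\left|\phi_n(\omega x)-\phi_n(\omega^{\infty})\right|$ by $V_n=\sum_{k=1}^{n}\var_k(\phi)$: the $j$-th term of the Birkhoff difference compares two points agreeing on only $n-j$ coordinates, so the term $j=n-1$ invokes $\var_1(\phi)$. But $\lim_{k\to\infty}\var_k(\phi)=0$ does not force $\var_1(\phi)<\infty$. For example, take $\phi$ locally constant, depending only on the first two edges, with the oscillation over the choice of second edge unbounded as the first edge ranges over the (infinitely many) edges of $G$; then $\var_2(\phi)=0$ while $\var_1(\phi)=\infty$, and $\phi$ is continuous. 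In that case $V_n=\infty$ for every $n$ and your bound $S_n\le e^{V_n}$ is vacuous. This is exactly the pitfall the paper's proof is engineered to avoid: fix $\epsilon>0$, choose $k$ with $\var_k(\phi)\le\epsilon$, and run your identical argument on a cylinder $Z(\mu)$ with $|\mu|=k$ and $s(\mu)=v$ in place of $[v]$. Then preimages in $\sigma^{-n}(x)\cap Z(\mu)$ and the corresponding periodic points agree on at least $k$ coordinates at every stage of the Birkhoff sum, each term is $\le\var_k(\phi)\le\epsilon$ by monotonicity of $k\mapsto\var_k(\phi)$, one gets $S_n\le e^{n\epsilon}$ with $S_n$ now the sum over periodic points in $Z(\mu)$, and \reflemma{c19} converts the resulting $\limsup$ into $P_{NW_G}(-\phi)\le\epsilon$ for arbitrary $\epsilon>0$. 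Everything else in your write-up survives verbatim; the required positivity $m(Z(\mu))>0$ follows from your backward-propagation step applied along $\mu$, starting from $m([r(\mu)])>0$ and the identity $m([r(\mu)])=\int_{Z(\mu)}e^{\phi_k}\,dm$.
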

\begin{proof}  Thanks to Proposition \ref{c5} we can assume that $NW_G
  \neq \emptyset$. Let $v \in NW_G$ and $\epsilon > 0$ be arbitrary. There is a
$k \in \mathbb N$ such that $\var_k(\phi) \leq \epsilon$. Consider a finite
path $\mu$ in $NW_G$ of length $k$ with $s(\mu) = v$. Then
$$
\sum_{\sigma^n(y) = y} e^{-\phi_n(y)} 1_{Z(\mu)}(y) \leq 
\sum_{y \in \sigma^{-n}(x)} e^{-\phi_n(y) +n\epsilon} 1_{Z(\mu)}(y)
$$
for all $x \in Z(\mu)$ and all $n > k$, cf. (\ref{h61}). It follows that
\begin{equation}\label{c98}
\begin{split}
&m(Z(\mu)) \sum_{\sigma^n(y) = y} e^{-\phi_n(y)} 1_{Z(\mu)}(y)  \leq
e^{n \epsilon} \int_{Z(\mu)} L^n_{-\phi}\left(1_{Z(\mu)}\right) \ d m
\\
& \leq e^{n\epsilon} \int_{\mathcal P(G)} L^n_{-\phi}\left(1_{Z(\mu)} \right) \ d m = %e^{n\epsilon} m\left(Z(\mu) \cap
%  \sigma^{-n}(Z(\mu))\right)\\
%& \leq 
e^{n\epsilon} m(Z(\mu))
\end{split}
\end{equation}
for all large $n$.
Note that $m(Z(\mu)) > 0$ since $m$ is ${\phi}$-conformal and
$(X,\sigma)$ is cofinal. It follows therefore from (\ref{c98}) that
$$
\limsup_n \frac{1}{n} \log  \sum_{\sigma^n(y) = y} e^{-\phi_n(y)}
1_{Z(\mu)}(y) \leq \epsilon .
$$
Thanks to Lemma \ref{c19} and Proposition \ref{c5}, this completes the proof.
\end{proof}

To formulate the next theorem we need a stronger condition on $\phi$
when $NW_G$ is non-empty and finite. Following Walters, \cite{W}, we say
that $\phi$ \emph{satisfies Bowen's condition} on $NW_G$ when there is
a $C > 0$ such that
$$
\left| \sum_{i=0}^{n-1} \left[ \phi\left(\sigma^i(x)\right) -
 \phi\left(\sigma^i(y)\right)\right]\right| \leq C
$$
for all $(x_i)_{i=0}^{\infty} ,(y_i)_{i=0}^{\infty} \in \mathcal
P(NW_G)$ such that $x_i = y_i, i = 0,1,2, \cdots, n-1$, and all $n
\geq 1$.

\begin{thm}\label{c41} Assume $G$ is a cofinal graph and that
  $\phi : \mathcal P(G) \to \mathbb R$ is a function such that
  $\lim_{k \to \infty} \var_k(\phi) =0$.

\begin{enumerate}
\item[1)] Assume that $NW_G = \emptyset$. There is a
  ${\phi}$-conformal measure for the left shift on $\mathcal P(G)$.
\item[2)] Assume that $NW_G$ is non-empty and finite. Assume that
  $\phi$ satisfies Bowen's condition on $NW_G$. There is a
  ${\phi}$-conformal measure for the left shift on $\mathcal P(G)$ if and only if $\mathbb P(-\phi) = 0$,
  and it is then unique up multiplication by a scalar.
\item[3)] Assume that $NW_G$ is infinite.  There is an
  ${\phi}$-conformal measure for the left shift if and only if $\mathbb P(-\phi) \leq
  0$.
\end{enumerate}
\end{thm}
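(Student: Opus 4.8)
The plan is to dispose of parts 1) and 3) quickly and to concentrate all the work on part 2).

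\emph{Parts 1 and 3.} In both cases I would first observe that $(\mathcal P(G),\sigma)$ is non-compact in the sense of Section~4: when $NW_G=\emptyset$ no vertex can occur twice along an infinite path, since a repetition would close up a loop; and when $NW_G$ is infinite the irreducible core already contains a path visiting infinitely many distinct vertices. Either way there is a point whose forward orbit meets infinitely many of the disjoint clopen sets $[v]$ and hence has non-compact closure, which by the criterion recorded just after the definition of non-compactness suffices. For part~1), $NW_G=\emptyset$ gives $\mathbb P(-\phi)=-\infty$ by \refprop{c5}, so \refthm{a63} produces a $\phi$-conformal measure. For part~3), the implication $\mathbb P(-\phi)\le 0\Rightarrow$ existence is again \refthm{a63}, while the converse is exactly \reflemma{c16}.

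\emph{Part 2, structure.} Now let $NW_G$ be finite and non-empty. I would first record the combinatorial picture forced by cofinality: every edge leaving $NW_G$ must stay in $NW_G$ (otherwise cofinality returns the target vertex to $NW_G$ and closes a loop through it), so $\mathcal P(NW_G)=\{p:\ s(p)\in NW_G\}$ is a compact, clopen, forward-invariant set on which $\sigma$ restricts to a topologically transitive shift of finite type over a finite alphabet; the same reasoning shows that no infinite path can stay outside $NW_G$ forever, so every path eventually enters $\mathcal P(NW_G)$. Since $f\in C(\mathcal P(NW_G))$ has $L_{-\phi}f$ again supported in $\mathcal P(NW_G)$, the operator $L_{-\phi}$ restricts to the Ruelle operator of this SFT, and under Bowen's condition the Ruelle--Perron--Frobenius theorem \cite{W} applies: $L_{-\phi}$ has a positive eigenmeasure, unique up to a scalar, whose eigenvalue is the spectral radius $e^{P_{NW_G}(-\phi)}=e^{\mathbb P(-\phi)}$ by \refprop{c5} and \reflemma{c19}. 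A measure on $\mathcal P(NW_G)$ is $\phi$-conformal exactly when it is such an eigenmeasure with eigenvalue $1$, so on the core a conformal measure exists iff $\mathbb P(-\phi)=0$, uniquely up to scale.

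\emph{Part 2, the two implications.} For the forward direction, given a $\phi$-conformal $m$, \reflemma{c16} already yields $\mathbb P(-\phi)\le 0$; the extra point is that $m(\mathcal P(NW_G))>0$. I would argue by contradiction: writing $\mathcal P(G)\setminus\mathcal P(NW_G)=\bigsqcup_{k\ge 1}T_k$ by the entry time $k$ into the core, one has $\sigma(T_k)\subseteq T_{k-1}$ with $T_0=\mathcal P(NW_G)$, so covering each $T_k$ by Borel pieces on which $\sigma$ is injective and invoking $m(\sigma(A))=\int_A e^{\phi}\,dm$ from \refdefn{a38} (with $e^{\phi}>0$), an induction on $k$ shows that $m(\mathcal P(NW_G))=0$ forces $m(T_k)=0$ for all $k$, i.e. $m=0$, which is impossible. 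Hence $m|_{\mathcal P(NW_G)}$ is a non-zero eigenmeasure of eigenvalue $1$, and RPF forces $\mathbb P(-\phi)=0$. For the converse and for uniqueness I would take the RPF eigenmeasure $\nu$ on the core available when $\mathbb P(-\phi)=0$ and extend it across the transient layers: \reflemma{a101}(3) rewrites conformality as $m(B)=\int_{\sigma(B)}e^{-\phi((\sigma|_{B})^{-1}(x))}\,dm(x)$ for Borel $B$ on which $\sigma$ is injective, and this formula determines $m|_{T_k}$ from $m|_{T_{k-1}}$; assembling the layers produces a $\phi$-conformal measure (checked via \reflemma{a101}(4)), while the same determination shows that any conformal measure is fixed on every $T_k$ by its restriction to the core, which RPF pins down up to a scalar. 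When $(\mathcal P(G),\sigma)$ happens to be non-compact one may instead read off existence directly from \refthm{a63}.

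The main obstacle is part~2: isolating the finite recurrent core and proving that a conformal measure must charge it, and then reconciling the behaviour on the compact core, governed by classical Ruelle--Perron--Frobenius theory, with the possibly non-compact transient part. Concretely, the delicate steps are the induction establishing $m(\mathcal P(NW_G))>0$ and the verification that the layer-by-layer extension of $\nu$ is a genuine regular, locally finite measure; Bowen's condition is used precisely here, through the existence and uniqueness of the RPF eigenmeasure on the finite shift of finite type.
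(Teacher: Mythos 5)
Your treatment of parts 1) and 3) coincides with the paper's: establish non-compactness of $(\mathcal P(G),\sigma)$ and then quote \refprop{c5}, \refthm{a63} and \reflemma{c16}. For part 2) you take a genuinely different, and essentially viable, route. The paper does not apply Ruelle--Perron--Frobenius theory to the irreducible core directly; it first splits $\mathcal P(NW_G)$ into its $p$ cyclic components $X_1,\dots,X_p$ ($p$ the global period), applies Walters' Theorems 1.3 and 2.16 to the \emph{mixing} system $(X_p,\sigma^p,\phi_p)$, identifies the relevant pressure as $p\,\mathbb P(-\phi)$ via \refprop{c5}, and then extends the resulting $\phi_p$-conformal measure in two stages: across the components $X_i$ by an explicit push-forward (equivalently $\mu=\sum_{i=0}^{p-1}(\mathcal L_{-\phi}^*)^i\nu$), and from $\mathcal P(NW_G)$ out to $\mathcal P(G)$ by induction over the vertex hierarchy $H_0=NW_G$, $H_n=\{w:\ r(s^{-1}(w))\subseteq H_{n-1}\}$, which exhausts $V_G$ by cofinality. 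Your layer-by-layer extension over the entry-time sets $T_k$ is the same idea in path-space rather than vertex language, and your induction showing $m(\mathcal P(NW_G))>0$ is a clean, self-contained substitute for the paper's appeal (inside the proof of \reflemma{c16}) to the fact that cofinality forces every cylinder to have positive measure. The one step you must repair is the invocation of \cite{W}: Walters' results are stated for topologically mixing subshifts, whereas your core is only irreducible and may be periodic, so the existence/uniqueness of the eigenmeasure with eigenvalue $e^{\mathbb P(-\phi)}$ and the non-existence of an eigenvalue-$1$ eigenmeasure unless $\mathbb P(-\phi)=0$ are not literally covered by the citation. The paper's period decomposition is precisely the reduction that makes the citation legitimate; inserting it (or an equivalent Perron--Frobenius argument for irreducible periodic systems) completes your proof, and what your approach buys in exchange is a more transparent global picture of the transient part and of why a conformal measure must charge the recurrent core.
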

\begin{proof} It is easy to see that in case 1) and 3) there is an  element $x \in \mathcal P(G)$ such that $\lim_{j \to \infty}
  \sigma^j(x) = \infty$. Hence $(X,\sigma)$ is non-compact in these cases
  and the stated conclusions follow from a) of Proposition \ref{c5},
  Lemma \ref{c16} and Theorem \ref{a63}.

Consider then case 2). Let $p$ be the global period of $NW_G$. Then
$\mathcal P(NW_G)$ is the disjoint union of compact and open sets $X_i, i = 1,2,
\cdots ,p$, such that $\sigma(X_i) = X_{i+1}$, mod $p$. Furthermore,
the restriction of $\sigma^p$ to $X_p$ is a mixing subshift of finite
type. Since $\phi$ satisfies Bowen's condition on $NW_G$ by assumption,
it follows that $\phi_p$ satisfies Bowen's condition on $X_p$ with
respect to $\sigma^p$. It follows therefore from Theorem 1.3 and Theorem 2.16 in \cite{W} that there is a
$\phi_p$-conformal measure for $\sigma^p$ on $X_p$ if and only if the
pressure of the restriction of $-\phi_p$ to $X_p$ (with respect to
$\sigma^p$) is zero. It follows from b) of Proposition \ref{c5} that
this pressure is $p \mathbb P(-\phi)$. Since a
$\phi$-conformal measure for $\sigma$ on $\mathcal P(G)$ will
restrict to a $\phi_p$-conformal measure for $\sigma^p$ on $X_p$, we
deduce that there can only be a $\phi$-conformal measure for $\sigma$ on
$\mathcal P(G)$ if $\mathbb P(-\phi) = 0$. Assume then that $\mathbb
P(-\phi) = 0$. It follows from the theorems of Walters quoted
above that there
is a $\phi_p$-conformal measure for $\sigma^p$ on $X_p$, and that it is
unique up to multiplication by a scalar. 

It suffices now
to show that a $\phi_p$-conformal measure $\nu$ for $\sigma^p$ on $X_p$ extends
uniquely to a $\phi$-conformal measure $\mu$ for $\sigma$ on $\mathcal
P(G)$. For each $i < p$ there is a partition $X_i = \sqcup_j B_{i,j}$
of $X_i$ into compact and open sets
such that $\sigma^{p-i} : B_{i,j} \to X_p$ is injective for each
$j$. To extend $\nu$ to a $\phi$-conformal measure $\mu$ on $\mathcal
P(NW_G)$ we must therefore define $\mu$ on $X_i$ by the requirement that 
\begin{equation}\label{u1}
\int_{X_i} g \ d \mu = \sum_j \int_{\sigma^{p-i}(B_{i,j})} g \left(
  \left(\sigma^{p-i}|_{B_{i,j}}\right)^{-1}(x)\right)
e^{-\phi_{p-i}\left(\left(\sigma^{p-i}|_{B_{i,j}}\right)^{-1}(x)\right)}
  \   d \nu(x) 
\end{equation}
when $g \in C(X_i)$. In particular, we see that an extension of $\nu$ to a
$\phi$-conformal measure for $\sigma$ on $\mathcal P(NW_G)$ is unique. To see
that such an extension exists it is possible to show that the recipe
(\ref{u1}) provides the required extension by using that $\nu$ is
$\phi_p$-conformal on $X_p$. Alternatively, one can first extend $\nu$
to a measure $\nu$ on $\mathcal P(NW_G)$ such that $\nu(X_i) = 0, i
\neq p$, and then take
$$
\mu = \sum_{i=0}^{p-1} \left(\mathcal L_{-\phi}^*\right)^i(\nu),
$$
where $\mathcal L_{-\phi}$ denotes the compression of $L_{-\phi}$ to
$C(\mathcal P(NW_G))$, i.e.
$$
\mathcal L_{-\phi}(g)(x) \ = \sum_{y \in \sigma^{-1}(x) \cap \mathcal
  P(NW_G)} \ e^{-\phi(y)} g(y) 
$$ 
when $g \in C(\mathcal P(NW_G))$. To extend $\mu$ from $\mathcal
P(NW_G)$ to $\mathcal P(G)$ note that
\begin{equation}\label{c57}
\int_{\sigma([v])} g \circ \left(\sigma|_{[v]}\right)^{-1} \ d\mu \ =
\ \int_{[v]} ge^{\phi} \ d\mu 
\end{equation}
when $v \in NW_G$ and $g \in C([v])$. Set $H_0 = NW_G$ and 
$$
H_{n} = \left\{ w \in V_G : \ r\left(s^{-1}(w)\right) \subseteq H_{n-1}
\right\} 
$$ 
for $n\geq 1$. Then $H_n \subseteq H_{n+1}$ for all $n$ and $\bigcup_n H_n = V_G$ because $G$ is cofinal,
cf. the proof of Lemma 2.4 in \cite{Th5}. When $w \in  H_1$ we can define a regular
Borel measure $\mu_w$ on
$[w]$ by the requirement that
$$
\int_{[w]} f \ d\mu_w = \int_{\sigma([w])} e^{-\phi \left( \left(\sigma|_{[w]}\right)^{-1}(x)\right)} f \circ
\left(\sigma|_{[w]}\right)^{-1}(x) \ d\mu(x) 
$$
for all $f \in C([w])$.
We extend $\mu$ to a regular Borel measure on
$\left\{x \in \mathcal P(G) : \ s(x) \in H_1 \right\}$,
by setting
$$
\mu(B) = \mu\left(
  B \cap \mathcal P(NW_G)\right) + \sum_{w \in H_1 \backslash H_0} \mu_w(B\cap [w]).
$$
Then (\ref{c57}) holds for all $v \in H_1$. Continuing by induction
we get a regular Borel measure on $\mathcal P(G)$ such that
(\ref{c57}) holds for $v \in V_G$. It follows then from Lemma
\ref{a39} that $\mu$ is ${\phi}$-conformal, and it is clear from the
construction that it is the only
$\phi$-conformal measure extending $\nu$.

\end{proof}

\begin{remark} In the cases 1) and 3) of Theorem \ref{c41} a $\phi$-conformal measure is
  generally not unique. 

The reason for the introduction of Bowen's condition in case 2) is 
that we do not know (more precisely, the author does not know) if there can be a $\phi$-conformal measure on a
mixing one-sided subshift of finite type when $\mathbb P(-\phi) < 0$ and the potential
$\phi$ is continuous, but does not satisfy Bowen's condition. By
Theorem 2.16 in \cite{W} it is possible to use a condition slightly
weaker than Bowen's, but beyond that nothing seems to be known. When $\mathbb P(-\phi) = 0$
there \emph{is} a $\phi$-conformal measure, also when $\phi$ is only assumed to
be continuous. This follows from Theorem 6.9 in \cite{Th2} since the spectral
radius of the Ruelle operator is $1$ when $\mathbb P(\phi) = 0$ by
Theorem 1.3 in \cite{W}. It is, however, not clear if the measure is unique in
general. Therefore, without assuming Bowen's condition or the slightly
weaker
condition used by Walters in Theorem 2.16 of \cite{W}, the only thing
we can say in case 2) is that there is a $\phi$-conformal measure if
$\mathbb P(-\phi) = 0$, and none if $\mathbb P(-\phi) > 0$.

%Also we do not know, in this
%case, if the $\phi$-conformal Borel probability measure must be unique
%when $\mathbb P(\phi) = 0$. The existence of such a
%measure follows from Theorem 6.9 in \cite{Th2} since the spectral
%radius of the Ruelle operator is $1$ when $\mathbb P(\phi) = 0$ by Theorem 1.3 in \cite{W}.
\end{remark}

For a mixing topological Markov shift O. Sarig has shown
the existence of an $e^{\mathbb P(\phi)}$-eigenmeasure for the dual
Ruelle operator when the potential has summable variation and is recurrent,
\cite{Sa1},\cite{Sa2}. Van Cyr extended this to transient potentials
in his thesis, \cite{Cy}. We can now supplement their results as
follows.

\begin{thm}\label{h20} Let $G$ be a countable connected directed graph
  with finite out-degree at each vertex. Assume that $G$ is not
  finite. Let $\phi : \mathcal P(G) \to
  \mathbb R$ be a function such that $\lim_{k \to \infty} \var_k (\phi)
  = 0$, and let $t \in \mathbb R$. There is a non-zero regular Borel measure $m$ on $\mathcal P(G)$ such that
$$
L_{\phi}^*(m) = e^t m 
$$ 
if and only if $t \geq \mathbb P(\phi)$.
\end{thm}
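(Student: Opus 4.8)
The plan is to reduce the eigenmeasure equation to a statement about conformal measures and then invoke Theorem~\ref{c41}. First I would record the affine scaling relation $L_{\phi - t} = e^{-t} L_{\phi}$ on $C_c(\mathcal P(G))$, which dualizes to $L_{\phi - t}^* = e^{-t} L_{\phi}^*$. Consequently the eigenmeasure equation $L_{\phi}^*(m) = e^t m$ holds if and only if $L_{\phi - t}^*(m) = m$, and by Lemma~\ref{a101} the latter says precisely that $m$ is $(t - \phi)$-conformal (since $-(t-\phi) = \phi - t$). Thus the existence of a non-zero regular eigenmeasure with eigenvalue $e^t$ is equivalent to the existence of a $(t-\phi)$-conformal measure for the left shift, and both notions carry the same regularity and non-triviality built into Definition~\ref{a38}.

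Next I would translate the numerical hypotheses through the same scaling. Since $\var_k(t - \phi) = \var_k(\phi) \to 0$, the shifted potential $t - \phi$ satisfies the standing variation assumption needed for the Markov-shift results. The relation $L_{\phi - t}^n(f) = e^{-nt} L_{\phi}^n(f)$ gives $\mathbb P_x(\phi - t) = \mathbb P_x(\phi) - t$ for every $x$, and hence $\mathbb P(\phi - t) = \mathbb P(\phi) - t$. In particular the condition $\mathbb P\bigl(-(t - \phi)\bigr) \le 0$, i.e. $\mathbb P(\phi - t) \le 0$, is equivalent to $t \ge \mathbb P(\phi)$, which is exactly the inequality in the statement.

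Finally I would verify that we fall under case 3) of Theorem~\ref{c41}. Connectedness of $G$ should be read as strong connectedness (irreducibility), so that every vertex lies on a loop and $NW_G = V_G$; together with finite out-degree (row-finiteness) and the absence of sinks, this makes $G$ cofinal in the sense of Section~\ref{MS1} and $\sigma$ a cofinal local homeomorphism. Because $G$ is not finite, $NW_G = V_G$ is infinite, so case 3) of Theorem~\ref{c41} applies to the potential $t - \phi$ and produces a $(t-\phi)$-conformal measure precisely when $\mathbb P(\phi - t) \le 0$. Combining this with the two equivalences above yields both implications of the theorem.

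The main obstacle — indeed essentially the only genuine subtlety — is ensuring that the problem lands in the infinite, irreducible regime of case 3) rather than the finite-$NW_G$ regime of case 2): it is exactly the infiniteness of $NW_G = V_G$ that delivers the inequality $t \ge \mathbb P(\phi)$ instead of the equality $t = \mathbb P(\phi)$ that Bowen's condition forces in the finite case. I would therefore argue carefully that ``connected and not finite'' forces $NW_G = V_G$ to be infinite (using that an infinite strongly connected graph has at least two vertices and hence a cycle through each vertex), and confirm row-finiteness and the no-sinks property so that the earlier machinery is genuinely in force.
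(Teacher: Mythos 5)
Your proposal is correct and follows essentially the same route as the paper: the paper's proof likewise reduces the eigenmeasure equation to the $(t-\phi)$-conformal measure problem via Lemma~\ref{a101}, notes that $\mathbb P(\phi - t) = \mathbb P(\phi) - t$, and invokes case 3) of Theorem~\ref{c41} after observing that connectedness gives cofinality and infiniteness of $G$ puts one in the infinite non-wandering regime. Your extra care about why $NW_G = V_G$ is infinite is a sound elaboration of a point the paper leaves implicit.
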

\begin{proof} $\mathcal P(G)$ is locally compact because $G$ has
  finite out-degree at each vertex, and second countable because $G$
  is countable. The dynamical system $(\mathcal P(G),\sigma)$ is
  cofinal because $G$ is connected, and non-compact because $G$ is
  also infinite. Therefore the
  theorem follows from Lemma \ref{a101} and Theorem \ref{c41} 3) after the
  observation that $\mathbb P(\phi - t) = \mathbb P(\phi) - t$.
\end{proof}

It follows from Sarigs results that there is a unique regular conservative
$e^t$-eigenmeasure for the dual Ruelle operator when $t = \mathbb P(\phi)$, provided $\phi$ is
recurrent, $G$ is aperiodic, and $\sum_{k=2}^{\infty} \var_k(\phi)
< \infty$, cf. \cite{Sa2}. His results do not require $G$ to have
finite out-degree at the vertexes. As will be shown in the next
section, at least in the locally
compact setting, $e^t$-eigenmeasures 
must be dissipative when $ t > \mathbb P(\phi)$, and when $t= \mathbb
P(\phi)$ and $\phi$ is transient.

%\bigskip

%\bigskip

%\bigskip

\section{Dissipativity}\label{dis}

 In this section we assume only that $X$ is a second countable locally
compact Hausdorff space, $\sigma : X \to X$ is a local homeomorphism and
$\phi : X \to \mathbb R$ is continuous.

\begin{lemma}\label{a34}  Assume that $m$ is
  ${\phi}$-conformal. Then $m \circ \sigma^{-1}$ is absolutely
  continuous with respect to $m$.
\end{lemma}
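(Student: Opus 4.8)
The plan is to show directly that $m$-null sets pull back to $m$-null sets under $\sigma$, that is, that $m(B) = 0$ implies $m\left(\sigma^{-1}(B)\right) = 0$ for every Borel set $B$; this is precisely the assertion that $m \circ \sigma^{-1} \ll m$. The key structural input is that $\sigma$ is a local homeomorphism, so I would first fix a countable cover $X = \bigcup_i U_i$ by open sets $U_i$ on each of which $\sigma$ restricts to a homeomorphism onto the open set $\sigma(U_i)$; such a cover exists because $X$ is second countable. Writing $\sigma^{-1}(B) = \bigcup_i \left( \sigma^{-1}(B) \cap U_i \right)$ reduces the problem, by countable subadditivity, to showing $m\left( \sigma^{-1}(B) \cap U_i \right) = 0$ for each $i$.

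For each $i$ I would set $A_i = \sigma^{-1}(B) \cap U_i$. This is a Borel subset of $U_i$, so $\sigma$ is injective on $A_i$, and $\sigma(A_i)$ is Borel, being the image of a Borel set under the homeomorphism $\sigma|_{U_i}$. A direct check shows $\sigma\left(\sigma^{-1}(B) \cap U_i\right) = B \cap \sigma(U_i) \subseteq B$, whence $m\left(\sigma(A_i)\right) \leq m(B) = 0$. Applying the conformality relation (\ref{a39}) with $A = A_i$ then gives $\int_{A_i} e^{\phi(x)} \ dm(x) = m\left(\sigma(A_i)\right) = 0$. Since $e^{\phi} > 0$ everywhere on $X$, the vanishing of this integral forces $m(A_i) = 0$. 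Summing over the countable index set yields $m\left(\sigma^{-1}(B)\right) = 0$ and completes the argument.

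There is no serious obstacle here: the proof rests on two facts, namely that the local homeomorphism structure lets us restrict to pieces on which $\sigma$ is injective, and that the strict positivity of $e^{\phi}$ converts $\int_{A_i} e^{\phi} \ dm = 0$ into $m(A_i) = 0$. The only points requiring a little care are the measurability of the images $\sigma(A_i)$ and the set identity $\sigma\left(\sigma^{-1}(B) \cap U_i\right) = B \cap \sigma(U_i)$, both of which are immediate from the fact that $\sigma|_{U_i}$ is a bijection onto $\sigma(U_i)$.
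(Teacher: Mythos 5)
Your proof is correct and follows essentially the same route as the paper: decompose $\sigma^{-1}(B)$ over countably many pieces on which $\sigma$ is injective, apply the conformality identity $m\bigl(\sigma(\sigma^{-1}(B)\cap U_i)\bigr)=\int_{\sigma^{-1}(B)\cap U_i}e^{\phi}\,dm$, and use the strict positivity of $e^{\phi}$. The only cosmetic difference is that the paper uses a disjoint Borel partition where you use an open cover, which changes nothing since countable subadditivity suffices.
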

\begin{proof} Write $X = \sqcup_{i \in \mathbb N} A_i$ as a disjoint
  union where the
  $A_i$'s are Borel subsets of $X$ such that
  $\sigma$ is injective on each $A_i$, and let $B \subseteq X$ be a
  Borel set. Since the
  conformality assumption implies that
\begin{equation*}\label{a35}
m(B \cap \sigma(A_i)) = m\left(\sigma\left(\sigma^{-1}(B) \cap
    A_i\right)\right) = \int_{\sigma^{-1}(B) \cap A_i} e^{\phi(x)}
\ dm(x),
\end{equation*}
it follows that $m\left(B\right) = 0 \ \Rightarrow \
  m\left(\sigma^{-1}(B) \cap A_i\right) = 0 \ \forall i \ \Rightarrow
  \ m(\sigma^{-1}(B)) = 0$. 
\end{proof}

In general $m \circ \sigma^{-1}$ is not equivalent to $m$; it is if
and only if $m(X \backslash \sigma(X)) = 0$. It follows from Lemma \ref{a34} that a ${\phi}$-conformal measure
$m$ gives rise to a \emph{Hopf decomposition} of $X$. That is, 
$$
X = C \sqcup D,
$$
where $C$ and $D$ are disjoint Borel sets with the following
properties, cf. \S 1.3 in \cite{Kr}:
\begin{enumerate}
\item[1)] $\sigma(C)  \subseteq C$,
\item[2)] For every Borel subset $A \subseteq C$,
$$
m \left( A \backslash \bigcap_{n=1}^{\infty} \bigcup_{k=n}^{\infty} \sigma^{-k}(A)\right) = 0.
$$
\item[3)] $D = \bigcup_{n=1}^{\infty} D_n$ where each $D_n$ is
  wandering in the sense that
$$
D_n \cap \sigma^{-k}(D_n) = \emptyset, \ k = 1,2,3, \cdots .
$$
\end{enumerate}
The set $C$ is the \emph{conservative part} of $\sigma$.

For a given ${\phi}$-conformal measure $m$, the Hopf decomposition
is unique modulo $m$-null sets, and we say that $m$
is \emph{dissipative} when the conservative part is an $m$-null set. Thus
$m$ is dissipative if and only if $X$ is the union of a countable collection
of wandering Borel sets, up to an $m$-null set.

\begin{lemma}\label{a45} Let $m$ be
  a ${\phi}$-conformal measure. Assume that $\sum_{n=1}^{\infty}
  L^n_{-\phi}(f)(x) < \infty$ for $m$-almost every $x$ when $f\in
  C_c(X)$ is non-negative. It follows that $m$ is dissipative.
\end{lemma}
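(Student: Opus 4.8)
The plan is to convert conservativity into a recurrence statement and then contradict it using the transfer-operator duality built into conformality. First I would record the pointwise identity
$$
L_{-\phi}\bigl((g\circ\sigma)f\bigr) \ = \ g\, L_{-\phi}(f),
$$
valid for $f \in C_c(X)$ and any continuous $g$, which is immediate from the definition \eqref{a47}. Iterating it and combining with $L_{-\phi}^*(m)=m$ from Lemma \ref{a101} (so that $\int_X L_{-\phi}^n(h)\,dm = \int_X h\,dm$), I obtain for every $f \in C_c(X)^+$ and $g \in C_c(X)$ the duality
$$
\int_X g\, L_{-\phi}^n(f)\,dm \ = \ \int_X (g\circ\sigma^n)\, f\,dm .
$$
The two sides are the integrals of $g$ against the finite Borel measures $L^n_{-\phi}(f)\,dm$ and the pushforward $\sigma^n_*(f\,dm)$; being finite regular Borel measures that assign equal integrals to all elements of $C_c(X)$, they coincide, so the identity persists with $g$ replaced by $1_A$ for an arbitrary Borel set $A$. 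Summing over $n \ge 1$ and invoking monotone convergence (everything is non-negative) gives the key formula
$$
\int_A \sum_{n=1}^\infty L_{-\phi}^n(f)\,dm \ = \ \int_X \Bigl(\sum_{n=1}^\infty 1_A(\sigma^n(x))\Bigr) f(x)\,dm(x).
$$

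With this in hand I would argue by contradiction, assuming the conservative part $C$ has $m(C)>0$. By inner regularity choose a compact $K \subseteq C$ with $0 < m(K) < \infty$, and by Urysohn pick $f \in C_c(X)^+$ with $f \ge 1$ on $K$. Writing $S_f = \sum_{n\ge 1} L_{-\phi}^n(f)$, the hypothesis gives $S_f < \infty$ $m$-a.e., so the Borel sets $K \cap \{S_f \le M\}$ increase to a set of full measure inside $K$; fix $M$ large enough that $A := K \cap \{S_f \le M\}$ has $m(A) > 0$. Since $A$ is a Borel subset of $C$, property 2) of the Hopf decomposition applies to it, yielding $\sum_{n\ge 1} 1_A(\sigma^n(x)) = \infty$ for $m$-a.e.\ $x \in A$.

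Now I would insert this $A$ and $f$ into the key formula. On the left, $\int_A S_f\,dm \le M\,m(A) < \infty$ by the choice of $A$. On the right, restricting to $A$ and using $f \ge 1$ on $A \subseteq K$ together with the recurrence just obtained, the integrand equals $+\infty$ on the positive-measure set $A$, so the right-hand side is infinite. This contradicts the equality of the two sides, forcing $m(C)=0$, i.e.\ $m$ is dissipative.

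I expect the one genuine subtlety to be the passage from pointwise a.e.\ finiteness of $S_f$ to a finite integral: a.e.\ finiteness by itself is far too weak. The device of truncating to the sublevel set $\{S_f \le M\}$ while keeping positive measure—and the essential observation that the resulting Borel set still lies in $C$, so the conservativity recurrence of property 2) still applies to it—is precisely what makes the contradiction run. The extension of the duality identity from $C_c(X)$ test functions to Borel indicators is routine via regularity, but is the other point that must be handled carefully.
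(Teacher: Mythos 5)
Your proof is correct and follows essentially the same route as the paper's: both establish the duality $\int_X 1_A\,L^n_{-\phi}(f)\,dm=\int_X (1_A\circ\sigma^n)\,f\,dm$ (the paper via a bounded sequence in $C_c(X)$ converging a.e.\ to the indicator, you via Riesz uniqueness for finite regular Borel measures), both truncate to a positive-measure sublevel set of $\sum_n L^n_{-\phi}(f)$ inside the conservative part, and both contradict the infinite recurrence guaranteed by property 2) of the Hopf decomposition. No gaps.
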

\begin{proof} Let $C$ be the conservative part of $\sigma$. Assume for a contradiction that $m(C) >
  0$. Since $m$ is regular there is a non-negative $f \in C_c(X)$ such
  that 
$$
C_1 = \left\{ x \in C : \ f(x) \geq 1 \right\}
$$ 
has positive
  $m$-measure. Note that $m(C_1) < \infty$. Since $\sum_{k=1}^{\infty} L_{-\phi}^k(f)(x) < \infty$ for
  $m$-almost all $x$ by assumption, there is an $N \in \mathbb N$ such that
$$
C_2 = \left\{ x \in C_1 : \ \sum_{k=1}^{\infty} L^k_{-\phi}(f)(x) \leq N
\right\}
$$
has positive $m$-measure. Note that
\begin{equation}\label{d3}
Nm(C_2) \geq \int_{C_2} \sum_{k=1}^{\infty}
L^k_{-\phi}(f)(x) \ dm(x) =  \sum_{k=1}^{\infty} \int_X 1_{C_2}
L^k_{-\phi}(f) \ dm .
\end{equation}
Let $\{g_k\}$ be a uniformly bounded sequence from $C_c(X)$ which converges $m$-almost
everywhere to $1_{C_2}$. Then
\begin{equation*}\label{d4}
\begin{split}
& \int_X 1_{C_2}
L^k_{-\phi}(f) \ dm = \lim_{n \to \infty} \int_X g_n
L^k_{-\phi}(f) \ dm =  \lim_{n \to \infty} \int_X 
L^k_{-\phi}(f g_n \circ \sigma^k) \ dm \\
& = \lim_{n \to \infty} \int_X f g_n \circ \sigma^k \ dm =  \int_X f
1_{C_2}\circ \sigma^k \ dm \geq  \int_{C_2}  
1_{C_2}\circ \sigma^k \ dm .
\end{split}
\end{equation*}
Inserted into (\ref{d3}) we get that
\begin{equation}\label{d5}
 \int_{C_2}  
\sum_{k=1}^{\infty} 1_{C_2}\circ \sigma^k \ dm \leq Nm(C_2) < \infty.
\end{equation}
However, since $\sigma$ is infinitely recurrent on $C$ by 2) above, we know that 
$$
\sum_{k=1}^{\infty} 1_{C_2} \circ \sigma^k (x) = \infty
$$
for $m$-almost every $x \in C_2$. This contradicts (\ref{d5}) since
$m(C_2) > 0$.

\end{proof}

The following is an immediate consequence of Lemma \ref{a45}.

\begin{prop}\label{c43} Assume that $(X,\sigma)$ is cofinal and that
  $\mathbb P(-\phi) < 0$. It follows that every
  ${\phi}$-conformal measure for $\sigma$ is dissipative.
\end{prop}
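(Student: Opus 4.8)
The plan is to deduce this directly from \reflemma{a45} by checking that its hypothesis is satisfied --- in fact for \emph{every} point $x \in X$, and therefore in particular for $m$-almost every $x$, regardless of which $\phi$-conformal measure $m$ is chosen. So the whole task reduces to showing that $\sum_{n=1}^{\infty} L^n_{-\phi}(f)(x) < \infty$ for all $x \in X$ whenever $f \in C_c(X)$ is non-negative.

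First I would dispose of the trivial case $f = 0$ and assume $f \in C_c(X)^+$. Since $(X,\sigma)$ is cofinal, Corollary \ref{b28} ensures that $\limsup_n \bigl(L^n_{-\phi}(f)(x)\bigr)^{1/n}$ does not depend on the particular choice of $f \in C_c(X)^+$, and by the definition of the pointwise pressure this quantity equals $e^{\mathbb P_x(-\phi)}$. From the defining supremum $\mathbb P(-\phi) = \sup_{x} \mathbb P_x(-\phi)$ we have $\mathbb P_x(-\phi) \leq \mathbb P(-\phi)$ for every $x$, and the standing assumption $\mathbb P(-\phi) < 0$ then gives
$$
\limsup_n \bigl(L^n_{-\phi}(f)(x)\bigr)^{\frac{1}{n}} \ = \ e^{\mathbb P_x(-\phi)} \ \leq \ e^{\mathbb P(-\phi)} \ < \ 1
$$
for every $x \in X$. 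Because $L_{-\phi}$ carries non-negative functions to non-negative functions, the terms $L^n_{-\phi}(f)(x)$ are non-negative, so the root test applies and yields $\sum_{n=1}^{\infty} L^n_{-\phi}(f)(x) < \infty$ for all $x$.

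This pointwise summability holds in particular $m$-almost everywhere for any $\phi$-conformal measure $m$, which is exactly the hypothesis of \reflemma{a45}; that lemma then forces the conservative part to be an $m$-null set, i.e.\ $m$ is dissipative, completing the proof. I do not expect a genuine obstacle here: the substance is entirely contained in the observation that the definition of $\mathbb P(\phi)$ encodes precisely a root-test condition on the iterates of the Ruelle operator. The only two points worth flagging are that cofinality (via Corollary \ref{b28}) is what makes $\mathbb P_x(-\phi)$ well defined independently of $f$, and that the \emph{strict} inequality $\mathbb P(-\phi) < 0$ --- rather than merely $\leq 0$ --- is what pushes the limes superior strictly below $1$ and thereby delivers convergence of the series.
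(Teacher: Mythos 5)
Your argument is correct and is precisely the route the paper takes: Proposition \ref{c43} is stated there as an immediate consequence of Lemma \ref{a45}, the unwritten details being exactly your root-test observation that $\mathbb P(-\phi)<0$ forces $\limsup_n\bigl(L^n_{-\phi}(f)(x)\bigr)^{1/n}=e^{\mathbb P_x(-\phi)}<1$ for every $x$, hence $\sum_n L^n_{-\phi}(f)(x)<\infty$ everywhere. Nothing further is needed.
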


We remark that Lemma \ref{a45} can also be used to show that for a
transient potential, as defined by O. Sarig in \cite{Sa1}, any
conformal measure is dissipative.

Although the conformal measures for a potential $\phi$ with $\mathbb
P(-\phi)$ negative must be dissipative, they may very well be finite. This
occurs already for constant potentials on certain locally compact
mixing Markov shifts.

%\bigskip

%\bigskip

%\bigskip

\section{Conformal measures in the exponential family}\label{exp}

Let $h : \mathbb C \to \mathbb C$ be a holomorphic map and $J(h)$
the Julia set of $h$. We assume that $h$ is transcendental, i.e. is
not a polynomial, and then $J(h)$ is closed, unbounded and totally invariant under $h$, viz. $h^{-1}(J(h))
= J(h)$. If we assume that $h'(z) \neq 0$ for all $z \in J(h)$, it
follows  that $h$ is locally
injective on $J(h)$ and hence that $h : J(h) \to J(h)$ is a local
homeomorphism. By Montel's theorem, Theorem 3.7 in
\cite{Mi}, there is a set $\mathcal E(h) \subseteq \mathbb C$,
consisting of at most one point $x$, which must be totally
$h$-invariant in the sense that $h^{-1}(x) = \{x\}$, such that for any
open subset $U$ of $\mathbb C$ with $U \cap J(h) \neq \emptyset$,
$\bigcup_{i,j \in \mathbb N}^{\infty} h^{-j}\left(h^i(U)\right)   = \mathbb C \backslash \mathcal E(h)$.
It follows that $J(h)\backslash \mathcal E(h)$ is
totally $h$-invariant, locally compact in the relative topology and
that $h : J(h) \backslash \mathcal E(h) \to J(h)\backslash \mathcal
E(h)$ is cofinal. Another application of Montel's theorem shows that
$\left(J(h)\backslash \mathcal E(h),h\right)$ is also non-compact. Hence the results of the previous
sections apply to this dynamical system.

In this setting probability conformal measures have been constructed by Mayer and Urbanski in
\cite{MU1} and \cite{MU2} for a large class of entire functions when
the potential $\phi$ is chosen carefully. When $h$ comes from the
exponential family $h(z) = \lambda e^z$, the potential considered by
Mayer and Urbanski is  
\begin{equation}\label{g7}
\phi(z) = \log |z|,
\end{equation}
or some 'tame' perturbation of $\phi$. The inverse temperature $\beta$
for
which a finite $\beta \phi$-conformal measure exists is
invariably unique, but we can now show that at least for the hyperbolic members
of the exponential family $\lambda e^z$ the situation is very different when
infinite conformal measures are also considered. To substantiate this we assume that $h = E_{\lambda}$ where $E_{\lambda}(z) = \lambda
e^{z}$ for some $0 < \lambda <
e^{-1}$. In this case $\mathcal E(E_{\lambda}) = \emptyset$. %It is known that $h$ satisfies conditions 1)-3). 2) is trivial.  For 1) and 3), see Proposition 2.1 in
%[Urbanski and Zdunik,2004]). We consider the potential
%\begin{equation}\label{g7}
%\phi(z) = \log |z| .
%\end{equation}
It follows from Proposition 4.5 in \cite{MU1} that
$$
A_{\beta} : = \sup_{x \in J(E_{\lambda})}  \sum_{y \in
  E_{\lambda}^{-1}(x)} |y|^{-\beta} < \infty
$$
for all $\beta > 1$, a fact which is also easy to verify directly in
the present case. This implies, in particular, that we can define $L_{-\beta \phi}$ by
the same formula as above, viz.
$$
L_{-\beta \phi}(g)(x) = \sum_{y \in E_{\lambda}^{-1}(x)}  |y|^{-\beta}g(y),
$$
as a positive linear operator on the vector space of bounded functions on
$J(E_{\lambda})$ for all $\beta > 1$. In order to combine the methods
and results of this paper with those of \cite{MU1}, we only have to prove the
following lemma.

\begin{lemma}\label{g6} When $E_{\lambda} : J(E_{\lambda})
  \to J(E_{\lambda})$ for some $\lambda \in ]0,e^{-1}[$ and $\phi$ is
  the potential (\ref{g7}), 
$$
\mathbb P(- \beta\phi) = \limsup_n \frac{1}{n} \log L^n_{-\beta \phi}(1) (x)
$$
for all $x \in J(E_{\lambda})$ and all $\beta > 1$.
\end{lemma}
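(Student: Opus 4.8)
The plan is to prove the two inequalities $\limsup_n \tfrac1n \log L^n_{-\beta\phi}(1)(x) \le \mathbb{P}(-\beta\phi)$ and $\limsup_n \tfrac1n \log L^n_{-\beta\phi}(1)(x) \ge \mathbb{P}(-\beta\phi)$ for every $x \in J(E_{\lambda})$. The only genuine subtlety is that the constant function $1$ is not in $C_c$, so the definition of $\mathbb{P}$ through test functions in $C_c(X)^+$ does not apply to it directly. Write $a_n(x) = L^n_{-\beta\phi}(1)(x)$; since $e^{-\beta\phi(z)} = |z|^{-\beta}$ and $A_{\beta} < \infty$, the operator $L_{-\beta\phi}$ preserves bounded functions and $a_n(x) \le A_{\beta}^n$, so the $\limsup$ is finite and, in fact, $\mathbb{P}(-\beta\phi) \le \log A_{\beta} < \infty$. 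For any $\chi \in C_c(X)^+$ with $0 \le \chi \le 1$ we have $L^n_{-\beta\phi}(\chi) \le a_n$, so $\limsup_n \tfrac1n\log a_n(x) \ge \mathbb{P}_x(-\beta\phi)$ is immediate, and taking the supremum over $x$ this already gives $\sup_x \limsup_n \tfrac1n \log a_n(x) \ge \mathbb{P}(-\beta\phi)$.

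For the upper bound I would first establish the crucial tail estimate: with $\epsilon(R) = \sup_{z \in J(E_{\lambda})} \sum_{y \in E_{\lambda}^{-1}(z),\ |y| > R} |y|^{-\beta}$, one has $\epsilon(R) \to 0$ as $R \to \infty$. This is where the explicit structure of $E_{\lambda}$ enters, through $E_{\lambda}^{-1}(z) = \{\log|z/\lambda| + i(\arg(z/\lambda) + 2\pi k) : k \in \mathbb{Z}\}$: for $z$ in a bounded region only terms with large $|k|$ survive the constraint $|y|>R$, and these are summable uniformly because $\beta > 1$, whereas for $|z|$ large the entire sum $\sum_{y \in E_{\lambda}^{-1}(z)}|y|^{-\beta}$ is already small (it is comparable to $(\log|z/\lambda|)^{1-\beta}$); splitting $J(E_{\lambda})$ into a bounded part and its complement yields the uniform bound. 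Granting this, choose $\chi_R \in C_c(X)^+$ with $0 \le \chi_R \le 1$ and $\chi_R \equiv 1$ on $\{|z| \le R\}$. Peeling off the deepest preimage in $a_n$ and using $1 - \chi_R \le 1_{\{|y| > R\}}$ gives the recursion $a_n(x) \le L^n_{-\beta\phi}(\chi_R)(x) + \epsilon(R)\, a_{n-1}(x)$. Iterating and inserting $L^m_{-\beta\phi}(\chi_R)(x) \le e^{m(\mathbb{P}(-\beta\phi) + \delta)}$, which holds for large $m$ since $\mathbb{P}_x \le \mathbb{P}$, produces, once $R$ is large enough that $\epsilon(R) < e^{\mathbb{P}(-\beta\phi)+\delta}$ (possible as $\mathbb{P}(-\beta\phi)$ is finite and $>-\infty$, the latter because $E_{\lambda}$ has a repelling periodic point in its Julia set), a convergent geometric series and the bound $\limsup_n \tfrac1n \log a_n(x) \le \mathbb{P}(-\beta\phi) + \delta$; letting $\delta \to 0$ closes the upper estimate.

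It remains to upgrade the supremum to ``for every $x$'', i.e.\ to prove $\limsup_n\tfrac1n\log a_n(x) \ge \mathbb{P}(-\beta\phi)$ at each fixed $x$. The key transfer inequality is that whenever $x' \in E_{\lambda}^{-n}(x)$, keeping only the single summand $w = x'$ in $L^{n+m}_{-\beta\phi}(1)(x) = L^n_{-\beta\phi}\bigl(L^m_{-\beta\phi}(1)\bigr)(x)$ yields
$$
a_{n+m}(x) \ge \Bigl(\prod_{j=0}^{n-1} |E_{\lambda}^j(x')|^{-\beta}\Bigr)\, a_m(x'),
$$
so $\limsup_m \tfrac1m\log a_m(x) \ge \limsup_m\tfrac1m \log a_m(x')$ for every backward iterate $x'$ of $x$. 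Since $\mathcal E(E_{\lambda}) = \emptyset$, the backward orbit $\bigcup_n E_{\lambda}^{-n}(x)$ is dense in $J(E_{\lambda})$. Choosing $x_0$ with $\mathbb{P}_{x_0}(-\beta\phi) > \mathbb{P}(-\beta\phi) - \delta$ and a neighbourhood $U$ of $x_0$ on which the inverse branches of the iterates have uniformly bounded distortion in the hyperbolic regime $0 < \lambda < e^{-1}$, one obtains a Harnack-type comparison $L^m_{-\beta\phi}(\chi)(x') \ge D^{-1} L^m_{-\beta\phi}(\chi)(x_0)$ for all $x' \in U$ and all $m$; picking a backward iterate $x'$ of $x$ inside $U$ and combining with the displayed inequality gives $\limsup_m\tfrac1m\log a_m(x) \ge \mathbb{P}(-\beta\phi) - \delta$, and $\delta \to 0$ finishes the proof.

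The self-contained part (the tail estimate plus the geometric recursion) handles the upper bound cleanly, with the only delicate point being the \emph{uniformity} of $\epsilon(R)$ over the unbounded set $J(E_{\lambda})$. The step I expect to be the real obstacle is the lower bound: promoting the supremum to pointwise equality forces an $x$-independence statement that the abstract pressure $\mathbb{P}_x$ does not possess in general (cf.\ Proposition \ref{c5}), and closing it appears to require the bounded distortion / Harnack estimates for the inverse branches supplied by the hyperbolic theory of \cite{MU1}, which is precisely the input that lets the present framework mesh with that of Mayer and Urbanski.
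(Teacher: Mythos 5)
Your proposal is correct in outline but follows a genuinely different route from the paper's. The paper imports from \cite{MU1} the fact that $Q(x):=\limsup_n\frac1n\log L^n_{-\beta\phi}(1)(x)$ is \emph{independent of} $x$; given that, the trivial bound $L^n_{-\beta\phi}(f)\le\|f\|L^n_{-\beta\phi}(1)$ yields $\mathbb P(-\beta\phi)\le Q$, and the whole content of the proof is the reverse inequality at a single point. This is done at a fixed point $x_0\in J(E_\lambda)$ by combining $L^n_{-\beta\phi}(f)\ge L^n_{-\beta\phi}(1)-m\,L^{n-1}_{-\beta\phi}(1)$ (your tail estimate, with $m=\epsilon(R)$) with $L^n_{-\beta\phi}(1)(x_0)\ge|x_0|^{-\beta}L^{n-1}_{-\beta\phi}(1)(x_0)$, choosing $R$ so that $m<|x_0|^{-\beta}$; the subtraction then becomes the positive multiple $(|x_0|^{-\beta}-m)L^{n-1}_{-\beta\phi}(1)(x_0)$ and no iteration or distortion estimate is needed. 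You instead prove the upper bound $Q(x)\le\mathbb P(-\beta\phi)$ pointwise by iterating $a_n\le L^n_{-\beta\phi}(\chi_R)+\epsilon(R)a_{n-1}$ into a geometric series -- a nice self-contained argument the paper does not contain, valid at every $x$ without invoking $x$-independence -- but you then must rebuild the pointwise lower bound, and your backward-orbit-plus-Harnack argument is in essence a re-derivation of the very $x$-independence statement the paper cites as a black box; the uniform-in-$m$ distortion estimate it rests on is exactly the hyperbolic machinery of \cite{MU1}, so you have not escaped that dependence, only relocated it. In short: your route makes one direction more elementary and pointwise, at the price of re-proving (via Koebe distortion and expansivity of inverse branches) what the paper's single citation of \cite{MU1} plus the fixed-point trick dispatches in a few lines. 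Both proofs share the same two external inputs in some form -- the uniform tail estimate (your $\epsilon(R)\to0$, the paper's Observation \ref{g3} from Lemma 5.3 of \cite{MU1}) and the comparison of $L^n_{-\beta\phi}(1)$ at different base points -- and both correctly note that $\mathbb P(-\beta\phi)$ is finite, using the fixed point for the lower bound and $A_\beta<\infty$ for the upper.
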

\begin{proof} It is shown in \cite{MU1} that $\limsup_n \frac{1}{n}
  \log L^n_{-\beta \phi}(1) (x)$ is independent of $x \in
  J(E_{\lambda})$. Since we clearly have the inequality
$$
\limsup_n \frac{1}{n} \log L^n_{-\beta \phi}(1) (x) \geq \limsup_n
\frac{1}{n} \log L^n_{-\beta \phi}(f) (x)
$$
for any non-negative $f \in C_c(J(E_{\lambda}))$ with $\|f\| \leq 1$, it suffices
therefore to find a single element $x_0 \in J(E_{\lambda})$ and a non-negative
function $f \in C_c(J(E_{\lambda}))$ such that
\begin{equation}\label{g10}
  \limsup_n
\frac{1}{n} \log L^n_{-\beta \phi}(f) (x_0) \geq \limsup_n \frac{1}{n}
\log L^n_{-\beta \phi}(1) (x_0) .
\end{equation}
For this purpose  we need the following observation which follows from
Lemma 5.3 in \cite{MU1}:
%note first that for each $x \in J(E_{\lambda})$ there is a
%  unique $s_x \in [0,2\pi[$ such that
%5\begin{equation}\label{g1}
%\begin{split}
%& \sum_{y \in E_{\lambda}^{-1}(x)} |y|^{-\beta} = \sum_{k \in \mathbb
%  Z} \left((\log |\lambda^{-1}x|)^2 + (s_x + 2\pi k)^2\right)^{-\frac{\beta}{2}} \\ 
%\end{split}
%\end{equation}

\begin{obs}\label{g3}  Set $B_R = \left\{z \in \mathbb C: \ |z| \leq
    R\right\}$. Then
$$
\lim_{R \to\infty} \sup_{x \in J(E_{\lambda})} \sum_{y \in E_{\lambda}^{-1}(x)
  \backslash B_R } |y|^{-\beta} \ = \ 0
$$
when $\beta > 1$.
\end{obs}
%when $B_R = \left\{z \in \mathbb C: \ |z| \leq R\right\}$.
%To prove this observation, let $\epsilon >0$ and note that 
%$$
%\left((\log |\lambda^{-1}x|)^2 + (s_x + 2\pi
%  k)^2\right)^{-\frac{\beta}{2}} \leq \left(|k|-1\right)^{-\beta}
%$$
%for all $x \in J(E_{\lambda})$ and all $k \in \mathbb Z \backslash
%\{-1,0,1\}$. Choose $M  > 1$ such that $\sum_{|k| > M}
%\left(|k|-1\right)^{-\beta} \leq \frac{\epsilon}{2}$. Choose $T > 0$
%such that
%$$
%\sum_{|k| \leq M}  \left((\log |\lambda^{-1}x|)^2 + (s_x + 2\pi
%  k)^2\right)^{-\frac{\beta}{2}} \leq \frac{\epsilon}{2}
%$$
%when $x \in J(E_{\lambda})$ and $|x| \geq \lambda e^T$. Now choose $R > 0$ so big that 
%$$
%E_{\lambda}^{-1}(x) \backslash B_R  \subseteq \left\{ \log
%  \lambda^{-1}|x| + (s_x + 2\pi k)i : \ |k| \geq M \right\}
%$$
%for all $x \in J(E_{\lambda})$ with $|x|\leq \lambda e^T$. It follows that$\sum_{y \in E^{-1}(x)
%  \backslash B_R } |y|^{-\beta} \leq \epsilon$ for all $x \in
%J(E_{\lambda})$, proving the observation. 

It is well-known that $J(E_{\lambda})$ contains a fixed point $x_0$
for $E_{\lambda}$. It follows
from Observation \ref{g3} that there is an $R > 0$ such that
 \begin{equation}\label{g8}
m  : = \sup_{x \in J(E_{\lambda})}  \sum_{y \in E_{\lambda}^{-1}(x) \backslash B_R
} |y|^{-\beta} < \left|x_0\right|^{-\beta} .
\end{equation}
%where $x_0 \in J(E_{\lambda})$ is a fixed point for $E_{\lambda}$. %We shall
%use this choice of $R$ to show that
%\begin{equation}\label{f20}
%\mathbb P_{x_0}(-\beta \phi) \geq \limsup_n \frac{1}{n} \log
%L^n_{-\beta \phi}(1)(x_0). 
%\end{equation}
Let $f \in C_c(J(E_{\lambda}))$ be a non-negative function such that $f(z) = 1$ when
$z \in B_R \cap J(E_{\lambda})$. Then $f + 1_{J(E_{\lambda})
  \backslash B_R} \geq 1$ on $J(E_{\lambda})$ and hence
\begin{equation}\label{f22}
L_{-\beta \phi}(f)(x) + L_{-\beta \phi}\left( 1_{J(E_{\lambda})
  \backslash B_R} \right)(x) \geq L_{-\beta \phi}(1)(x) 
\end{equation}
for all $x\in J(E_{\lambda})$.
It follows from (\ref{f22}) first that
$$
L_{-\beta \phi}(f)(x) \geq L_{-\beta \phi}(1)(x) - m, 
$$
%where 
%$$
%m =\sup_{x \in \mathbb C}  \sum_{y \in E_{\lambda}^{-1}(x) \backslash B_R
%} |y|^{-\beta} ,
%$$
and then that
$$
L^n_{-\beta \phi}(f)(x) \geq L^n_{-\beta \phi}(1)(x) -  mL^{n-1}_{-\beta \phi}(1)(x)  .
$$
for all $n$ and $x$. By using that
$$
L^n_{-\beta \phi}(1)(x_0) = \sum_{y \in
  E^{-1}_{\lambda}(x_0)}|y|^{-\beta} L^{n-1}_{-\beta \phi}(1)(y) \
\geq  \ 
\left|x_0\right|^{-\beta} L^{n-1}_{-\beta \phi}(1)(x_0),
$$
we obtain the estimate
$$
L^n_{-\beta \phi}(f)(x_0) \geq   \left(|x_0|^{-\beta} -m \right)  L^{n-1}_{-\beta \phi}(1)(x_0)  .
$$
for all $n$. Thanks to (\ref{g8}) this proves (\ref{g10}). 
\end{proof}

It follows from Lemma \ref{g6} that $\mathbb P(-\beta \phi) =
P(\beta)$, where $P$ is the pressure function considered
in Proposition 7.2 of \cite{MU1}. We can therefore deduce from
Proposition 7.2 in \cite{MU1} that there is a unique $\beta_0 > 1$ such that $\mathbb
P(-\beta_0 \phi) = 0$, and that $\mathbb P(-\beta \phi) < 0$ for all
$\beta > \beta_0$. As shown in \cite{MU1} the number $\beta_0$ is the
Hausdorff dimension $HD(J_r(E_{\lambda}))$ of the radial Julia set
$$
J_r(E_{\lambda}) = \left\{ z \in J(E_{\lambda}) : \ \liminf_n
  |E_{\lambda}^n(z)| < \infty \right\} .
$$  
In particular, $\beta_0 < 2$ by Corollary 1.4 in \cite{MU1}. A main
tool in \cite{MU1} for the study of $J_r(E_{\lambda})$ is a
${\beta_0 \phi}$-conformal measure. It follows now from Theorem
\ref{a63} that they exist for all $\beta \geq HD(J_r(E_{\lambda}))$,
and from Proposition \ref{c43} that they are dissipative when $\beta >
HD(J_r(E_{\lambda}))$, i.e. we have the following.

\begin{prop}\label{g11} For each $\lambda \in ]0,e^{-1}[$ and each
  $\beta \geq HD(J_r(E_{\lambda}))$ there is an ${\beta
    \phi}$-conformal measure for $E_{\lambda} : J(E_{\lambda}) \to
  J(E_{\lambda})$. For $\beta > HD(J_r(E_{\lambda}))$ the measures are dissipative.
\end{prop}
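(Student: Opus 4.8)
The plan is to obtain the proposition as a direct corollary of the general existence theorem Theorem~\ref{a63} and the dissipativity criterion Proposition~\ref{c43}, feeding into them the pressure information assembled earlier in this section. The two assertions concern existence of a $\beta\phi$-conformal measure for $\beta \geq HD(J_r(E_\lambda))$ and its dissipativity for $\beta > HD(J_r(E_\lambda))$, so I would dispatch them in turn after recording the sign of $\mathbb P(-\beta\phi)$.

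The first step is to invoke the identification made just above: by Lemma~\ref{g6} combined with Proposition~7.2 of \cite{MU1}, the map $\beta \mapsto \mathbb P(-\beta\phi)$ coincides with the Mayer--Urbanski pressure, and $\beta_0 := HD(J_r(E_\lambda))$ is the unique parameter with $\mathbb P(-\beta_0\phi) = 0$, while $\mathbb P(-\beta\phi) < 0$ for every $\beta > \beta_0$. Hence $\mathbb P(-\beta\phi) \leq 0$ for all $\beta \geq \beta_0$, with strict inequality exactly when $\beta > \beta_0$. I would also recall from the start of the section that, because $\mathcal E(E_\lambda) = \emptyset$ for these parameters, $(J(E_\lambda), E_\lambda)$ is a cofinal and non-compact local homeomorphism to which the general theory applies.

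For existence I would fix a non-negative non-zero $h \in C_c(J(E_\lambda))$ and apply Theorem~\ref{a63} with the potential $\beta\phi$: cofinality and non-compactness hold, and $\mathbb P(-\beta\phi) \leq 0$ whenever $\beta \geq \beta_0$, so the theorem delivers a $\beta\phi$-conformal measure normalised by $\int_X h \, dm = 1$. For dissipativity I would take $\beta > \beta_0$, so that $\mathbb P(-\beta\phi) < 0$; then Proposition~\ref{c43}, which requires only cofinality and strict negativity of the pressure, immediately gives that every $\beta\phi$-conformal measure is dissipative.

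There is essentially no remaining obstacle, since all the analysis has been front-loaded into Lemma~\ref{g6} and into the general results Theorem~\ref{a63} and Proposition~\ref{c43}; the proposition is a bookkeeping consequence. The only point that genuinely carries weight is the translation between the two pressures, namely that the threshold $\beta_0$ at which $\mathbb P(-\beta\phi)$ changes sign is exactly the Hausdorff dimension $HD(J_r(E_\lambda))$ appearing in \cite{MU1}. That identification is the content of Lemma~\ref{g6}, so once it is available the inequalities $\mathbb P(-\beta\phi) \leq 0$ and $\mathbb P(-\beta\phi) < 0$ slot straight into the hypotheses of the cited results.
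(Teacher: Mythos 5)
Your proposal is correct and follows the paper's own route exactly: Lemma~\ref{g6} together with Proposition~7.2 of \cite{MU1} identifies $HD(J_r(E_\lambda))$ as the unique zero of $\beta \mapsto \mathbb P(-\beta\phi)$ with strict negativity beyond it, and then Theorem~\ref{a63} and Proposition~\ref{c43} give existence and dissipativity respectively, using the cofinality and non-compactness of $(J(E_\lambda),E_\lambda)$ established at the start of the section.
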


As shown in \cite{MU1} there is a Borel probability measure on
$J(E_{\lambda})$ which is a $\beta
  \phi$-conformal measure when $\beta =  HD(J_r(E_{\lambda}))$, and
in \cite{MU2} it is shown that it is unique. The paper \cite{MU2}
contains much more information on this Borel probability measure.

If we consider the potential
$$
\psi(z) = \log \left|E_{\lambda}(z)\right|,
$$
there is a bijective correspondence between the ${\beta
  \phi}$-conformal and the ${\beta \psi}$-conformal measures given
by sending the ${\beta \phi}$-conformal measure $m$ to the ${\beta
  \psi}$-conformal measure $|z|^{\beta} dm(z)$. It follows from the
argument in
Remark 5.7 in \cite{Th3} that there are no finite ${\beta
  \psi}$-conformal measure for any $\beta$, but Proposition \ref{g11}
shows that infinite conformal measures exist for all $\beta \geq HD(J_r(E_{\lambda}))$:

\begin{cor}\label{g12} For each $\lambda \in ]0,e^{-1}[$ and each
  $\beta \geq HD(J_r(E_{\lambda}))$ there is an ${\beta
    \psi}$-conformal measure for $E_{\lambda} : J(E_{\lambda}) \to
  J(E_{\lambda})$. For $\beta > HD(J_r(E_{\lambda}))$ the measures are dissipative.
\end{cor}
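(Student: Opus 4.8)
The plan is to deduce the corollary from Proposition \ref{g11} by making precise the bijective correspondence $m \mapsto |z|^\beta\, dm(z)$ between $\beta\phi$- and $\beta\psi$-conformal measures announced just before the statement. The essential algebraic fact is that $\psi = \phi \circ E_\lambda$, so that $e^{\beta\psi(z)} = |E_\lambda(z)|^\beta = |\sigma(z)|^\beta$ while $e^{\beta\phi(z)} = |z|^\beta$. Thus $\beta\psi - \beta\phi = u\circ\sigma - u$ with $u = \beta\phi$, i.e. the two potentials are cohomologous, and multiplication by $e^{u} = |z|^\beta$ is exactly the device that transports conformal measures from one to the other.

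First I would fix a $\beta\phi$-conformal measure $m$, which exists for every $\beta \geq HD(J_r(E_\lambda))$ by Proposition \ref{g11}, and define $\mu$ by $d\mu(z) = |z|^\beta\, dm(z)$. Since $0$ lies in the Fatou set of $E_\lambda$ for $\lambda \in \,]0,e^{-1}[$, the closed set $J(E_\lambda)$ satisfies $\dist(0, J(E_\lambda)) > 0$, so the density $|z|^\beta$ is continuous, finite, and bounded below by a positive constant on $J(E_\lambda)$; hence $\mu$ is a well-defined non-zero regular Borel measure equivalent to $m$. To verify that $\mu$ is $\beta\psi$-conformal I would check condition 3) of Lemma \ref{a101}: for an open set $U$ on which $\sigma$ is injective and $g \in C_c(U)$, apply the $\beta\phi$-conformality of $m$ (again through Lemma \ref{a101} 3)) to the test function $\tilde g(x) = g(x)\,|\sigma(x)|^\beta \in C_c(U)$. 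Unwinding the change of variables along $\sigma|_U$ turns $\int_{\sigma(U)} g\circ(\sigma|_U)^{-1}\, d\mu$ into $\int_U g(x)\,|\sigma(x)|^\beta\,|x|^\beta\, dm(x) = \int_U g\, e^{\beta\psi}\, d\mu$, which is precisely the required identity. This establishes existence for every $\beta \geq HD(J_r(E_\lambda))$.

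For the dissipativity claim I would argue that, since $|z|^\beta$ is strictly positive and finite on $J(E_\lambda)$, the measures $m$ and $\mu$ have the same null sets. The Hopf decomposition $X = C \sqcup D$ depends only on the local homeomorphism $\sigma$ and the class of null sets, so $m$ and $\mu$ share the same conservative part $C$. By Proposition \ref{g11} the measure $m$ is dissipative when $\beta > HD(J_r(E_\lambda))$, i.e. $m(C) = 0$; equivalence of $m$ and $\mu$ then gives $\mu(C) = 0$, so $\mu$ is dissipative as well.

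I expect the only point requiring genuine care to be the change-of-variables bookkeeping in the second paragraph, in particular keeping the density factor $|z|^\beta$ and the potential factor $|\sigma(x)|^\beta$ separate so that they recombine correctly with the conformality of $m$. The positivity and finiteness of the density, which underwrite both the regularity of $\mu$ and the equivalence used for dissipativity, are immediate from $0 \notin J(E_\lambda)$ and require no estimate.
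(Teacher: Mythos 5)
Your argument is correct and is exactly the route the paper takes: the corollary is obtained from Proposition \ref{g11} via the correspondence $m \mapsto |z|^{\beta}\,dm(z)$, which works because $\beta\psi-\beta\phi$ is the coboundary $\beta\phi\circ\sigma-\beta\phi$, and dissipativity transfers since $0\notin J(E_{\lambda})$ makes the density $|z|^{\beta}$ bounded below, so the two measures are equivalent. Your verification of condition 3) of Lemma \ref{a101} with the test function $g\,|\sigma(\cdot)|^{\beta}$ is precisely the bookkeeping the paper leaves implicit.
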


%\bigskip

%\bigskip

%\bigskip

\section{KMS weights from conformal measures}\label{last}

A local homeomorphism $\sigma$ on a locally compact Hausdorff space
$X$ gives rise to an
\'etale groupoid $\Gamma_{\sigma}$ and hence a $C^*$-algebra
$C^*(\Gamma_{\sigma})$ by a construction introduced in
increasing generality by Renault, Deaconu and Anantharaman-Delaroche,
\cite{Re1}, \cite{De}, \cite{An}. To
describe the construction, set
$$
\Gamma_{\sigma} = \left\{ (x,k,y) \in X \times \mathbb Z  \times X :
  \ \exists n,m \in \mathbb N, \ k = n -m , \ \sigma^n(x) =
  \sigma^m(y)\right\} .
$$
This is a groupoid with the set of composable pairs being
$$
\Gamma_{\sigma}^{(2)} \ =  \ \left\{\left((x,k,y), (x',k',y')\right) \in \Gamma_{\sigma} \times
  \Gamma_{\sigma} : \ y = x'\right\}.
$$
The multiplication and inversion are given by 
$$
(x,k,y)(y,k',y') = (x,k+k',y') \ \text{and}  \ (x,k,y)^{-1} = (y,-k,x)
.
$$
$\Gamma_{\sigma}$ is a locally compact Hausdorff space in the topology where
open
sets $U,V$ in $X$ with the property that $\sigma^n$ is injective on $U$ and
$\sigma^m$ is injective on $V$ give rise to the open set
$$
\left\{ (x,k,y) \in U \times (n-m) \ \times V: \  \sigma^n(x) =
  \sigma^m(y) \right\}
$$
in $\Gamma_{\sigma}$, and where sets of this form constitute a base for the topology. The
set $C_c(\Gamma_{\sigma})$ of compactly supported functions on
$\Gamma_{\sigma}$ is a
$*$-algebra with product
$$
fg(x,k,y) = \sum_{z, \ n- m =k } f(x,n,z)g(z,m,y)
$$
and involution $f^*(x,k,y) = \overline{f(y,-k,x)}$. The $C^*$-algebra
$C^*(\Gamma_{\sigma})$ is the completion of $C_c(\Gamma_{\sigma})$
with respect to a natural norm, cf. e.g. \cite{An} or
\cite{Th1}. Since we have been working with the condition that $\phi$
is cofinal it is worth pointing out that cofinality of $\phi$ is
almost equivalent to the simplicity of $C^*(\Gamma_{\sigma})$. In
fact, by
Theorem 4.16 in \cite{Th1}, $C^*(\Gamma_{\sigma})$ is simple if and only
if $\sigma$ is cofinal (a condition which was called 'irreducibility' in
\cite{Th1}) and $\left\{ x \in X : \ \sigma^k(x) = x \right\}$ has
non-empty interior for all $k \in \mathbb N$. In particular, if
$(X,\sigma)$ is non-compact, $C^*(\Gamma_{\sigma})$ is simple if and
only if $\sigma$ is cofinal.

A continuous 
function $\phi : X \to \mathbb R$ gives rise to a continuous
one-parameter automorphism group $\alpha^{\phi}_t, \ t \in \mathbb R$,
on $C^*(\Gamma_{\sigma})$
defined such that
$$
\alpha^{\phi}_t(f)(x,k,y) = \lim_{n \to \infty} e^{i t \left(\phi_{k+n}(x) - \phi_n(y)\right)} f(x,k,y) 
$$
when $f \in C_c(\Gamma_{\phi})$. The case where $\phi$ is the constant function
$1$ yields the so-called \emph{gauge action}.

Any regular Borel measure $m$ on $X$ gives rise to a densely defined
lower semi-continuous weight $\varphi_m$ on $C^*(\Gamma_{\sigma})$ such
that
$$
\varphi_m(f) = \int_{X} f(x,0,x) \ dm(x)
$$
when $f \in C_c(\Gamma_{\phi})$. This construction is the link
between conformal measures and KMS-weights because it turns out that
for any $\beta \in \mathbb R$, a regular Borel measure $m$ is $\beta\phi$-conformal if and
only if the weight $\varphi_m$ is a $\beta$-KMS-weight for the one-parameter
group $\alpha^{\phi}$, cf. Proposition 2.1 and Lemma 3.2 in
\cite{Th4}. Furthermore, all gauge-invariant $\beta$-KMS weights arise
in this way by Proposition 3.1 in \cite{Th4}. Thanks to this relation
between KMS-weights and conformal measures the preceding methods and
results have consequences for KMS-weights, some of which we now
summarise. For example we get the following from Theorem \ref{a63}.

\begin{thm}\label{e2} Assume that $(X,\sigma)$ is non-compact and
  cofinal, and that $\phi : X \to \mathbb R$ is a continuous
  function. Let $\beta \in \mathbb R$ be a real number such that
  %Assume that $\beta \in
  %\mathbb R$ is such that
  $\mathbb P(-\beta \phi) \leq 0$. Then there is a gauge invariant $\beta$-KMS weight for the
  one-parameter group $\alpha^{\phi}$ on $C^*(\Gamma_{\sigma})$.
\end{thm}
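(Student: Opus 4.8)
The plan is to obtain the weight directly from a conformal measure, using the dictionary recalled just above between $\beta\phi$-conformal measures and gauge invariant $\beta$-KMS weights for $\alpha^{\phi}$. Concretely, I would apply Theorem~\ref{a63} not to $\phi$ itself but to the potential $\beta\phi$, which is again continuous, and then feed the resulting measure into the construction $m \mapsto \varphi_m$.

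First I would note that $(X,\sigma)$ is cofinal and non-compact by assumption, so the pressure $\mathbb P(\psi)$ is defined for every continuous $\psi$; in particular $\mathbb P(-\beta\phi)$ makes sense and the hypothesis is precisely $\mathbb P(-\beta\phi) \leq 0$. This is exactly the hypothesis required to apply Theorem~\ref{a63} with the potential $\beta\phi$ playing the role of $\phi$. Choosing any non-negative non-zero $h \in C_c(X)$, Theorem~\ref{a63} then produces a regular Borel measure $m$ that is $\beta\phi$-conformal, normalized by $\int_X h \, dm = 1$ (only the non-vanishing of $m$ will be needed below).

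Next I would invoke the correspondence from \cite{Th4}. By Proposition 2.1 and Lemma 3.2 of \cite{Th4}, a regular Borel measure on $X$ is $\beta\phi$-conformal if and only if the associated weight $\varphi_m(f) = \int_X f(x,0,x)\, dm(x)$ is a $\beta$-KMS weight for $\alpha^{\phi}$ on $C^*(\Gamma_{\sigma})$. Applying this to the measure $m$ from the previous step yields a $\beta$-KMS weight $\varphi_m$, which is non-zero (and densely defined and lower semicontinuous, as for any regular Borel measure) because $m$ is non-zero. Finally, gauge invariance is immediate: the gauge action multiplies $f(x,k,y)$ by $e^{itk}$, so it fixes every function supported on the diagonal $\{(x,0,x)\}$ and hence leaves $\varphi_m$ invariant; equivalently, $\varphi_m$ is of the form singled out by Proposition 3.1 in \cite{Th4}.

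There is no serious analytic obstacle remaining once Theorem~\ref{a63} is in hand: the entire force of the argument has been pushed into the construction of the conformal measure there and into the measure-to-weight dictionary of \cite{Th4}. The only points needing care are the purely formal one of matching the pressure hypotheses --- i.e.\ recognizing that $\mathbb P(-\beta\phi) \leq 0$ is the correct input for Theorem~\ref{a63} applied to $\beta\phi$ --- and the verification that the resulting weight is genuinely non-trivial, which follows at once from the non-vanishing of $m$.
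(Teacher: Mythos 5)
Your proposal is correct and follows exactly the route the paper takes: apply Theorem~\ref{a63} to the potential $\beta\phi$ under the hypothesis $\mathbb P(-\beta\phi)\leq 0$ to obtain a $\beta\phi$-conformal measure, and then pass to a gauge invariant $\beta$-KMS weight via the correspondence $m\mapsto\varphi_m$ of Proposition 2.1, Lemma 3.2 and Proposition 3.1 in \cite{Th4}. Nothing is missing.
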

%\begin{proof} Combine Theorem \ref{a63} above with Proposition 3.1
%  from \cite{Th1}.
%\end{proof}

When $G$ is a cofinal graph, as those considered in Sections \ref{MS1}
and \ref{MS2}, the $C^*$-algebra $C^*(\Gamma_{\sigma})$ coming from
the shift $\sigma$ on $\mathcal P(G)$ is
known as the graph $C^*$-algebra associated with $G$, cf. \cite{KPRR},
and it is usually denoted by $C^*(G)$. %Therefore Example \ref{h11} above
%gives what appears to be the first example of a purely infinite simple
%graph
%algebra for which the gauge action has KMS-states for more than one
%inverse temperature. This indicates that there is a structure
%of KMS states on simple graph algebras awaiting to be excavated.. 
From the results above regarding more general
potentials we easily 
get the following consequences.

\begin{thm}\label{e3} Let $G$ be a cofinal graph and $\phi :
  \mathcal P(G) \to \mathbb R$ a function such that $\lim_{k
    \to \infty} \var_k(\phi) =0$. Consider the one-parameter group
  $\alpha^{\phi}$ on $C^*(G)$.
\begin{enumerate}
\item[1)] Assume that $NW_G = \emptyset$. There is a gauge-invariant
  $\beta$-KMS weight for $\alpha^{\phi}$ for
  all $\beta \in \mathbb R$.
\item[2)] Assume that $NW_G$ is non-empty and finite. Assume that
  $\phi$ satisfies Bowen's condition on $NW_G$. There is a gauge-invariant
  $\beta$-KMS weight for $\alpha^{\phi}$ if and only if $\mathbb P(-\beta
  \phi) = 0$, and if it exists this $\beta$-KMS weight is 
  unique up to multiplication by a scalar.
\item[3)] Assume that $NW_G$ is infinite. There is a gauge-invariant
  $\beta$-KMS weight for $\alpha^{\phi}$ if and only if $\mathbb P(-\beta
  \phi) \leq 0$.
\end{enumerate}
\end{thm}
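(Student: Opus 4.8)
The plan is to deduce the theorem directly from Theorem \ref{c41} by invoking the correspondence between gauge invariant $\beta$-KMS weights and conformal measures recalled at the beginning of this section. The cited results from \cite{Th4} (Proposition 2.1, Lemma 3.2 and Proposition 3.1) say precisely that the assignment $m \mapsto \varphi_m$ restricts to a bijection between the set of $\beta\phi$-conformal measures on $\mathcal P(G)$ and the set of gauge invariant $\beta$-KMS weights for $\alpha^{\phi}$ on $C^*(G)$. Consequently each of the three existence assertions about KMS weights is equivalent to the corresponding assertion about $\beta\phi$-conformal measures, and the whole proof reduces to checking that Theorem \ref{c41} is applicable to the potential $\beta\phi$ in place of $\phi$, with the pressure conditions $\mathbb P(-\phi)=0$ and $\mathbb P(-\phi)\le 0$ there becoming the conditions $\mathbb P(-\beta\phi)=0$ and $\mathbb P(-\beta\phi)\le 0$ stated here.

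The first step is therefore to verify that $\beta\phi$ inherits the hypotheses imposed on $\phi$. Since $\var_k(\beta\phi) = |\beta|\,\var_k(\phi)$, the assumption $\lim_{k\to\infty}\var_k(\phi)=0$ immediately gives $\lim_{k\to\infty}\var_k(\beta\phi)=0$ for every real $\beta$, so the variation hypothesis of Theorem \ref{c41} holds for $\beta\phi$. In case 2), where Bowen's condition is needed, I would observe that rescaling the potential by $\beta$ multiplies the defining Bowen constant $C$ by $|\beta|$, so $\beta\phi$ satisfies Bowen's condition on $NW_G$ precisely when $\phi$ does. With these two remarks in hand, parts 1), 2) and 3) of Theorem \ref{c41}, applied to $\beta\phi$, translate verbatim into the existence statements of parts 1), 2) and 3) of the present theorem via the equivalence ``$\beta\phi$-conformal measure exists'' $\Leftrightarrow$ ``gauge invariant $\beta$-KMS weight exists''.

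The only clause requiring slightly more attention is the uniqueness statement in 2). Theorem \ref{c41} 2) yields uniqueness of the $\beta\phi$-conformal measure up to a positive scalar; to transport this to the weight I would note that $\varphi_{cm}=c\,\varphi_m$ for $c>0$ and that $m$ is recovered from $\varphi_m$ by restricting the weight to functions supported on the unit space $X\subseteq\Gamma_\sigma$, so that $m\mapsto\varphi_m$ is injective and carries scalar multiples to scalar multiples. Hence uniqueness up to a scalar passes from the measure to the KMS weight. I do not anticipate any genuine obstacle: all of the dynamical and measure-theoretic substance is already contained in Theorem \ref{c41} and Proposition \ref{c5}, and what remains is essentially a dictionary translation. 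The only points one must genuinely watch are the stability of the variation and Bowen hypotheses under the rescaling $\phi\mapsto\beta\phi$ and the compatibility of the correspondence with scalar multiplication, both of which are elementary.
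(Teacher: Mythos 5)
Your proposal is correct and is essentially the paper's own proof, which simply combines Theorem \ref{c41} with Proposition 3.1 of \cite{Th4}; the extra remarks you supply (that $\var_k(\beta\phi)=|\beta|\var_k(\phi)$, that Bowen's condition is stable under the rescaling $\phi\mapsto\beta\phi$, and that the correspondence $m\mapsto\varphi_m$ respects scalar multiples) are exactly the routine verifications the paper leaves implicit.
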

\begin{proof} Combine Theorem \ref{c41} above with Proposition 3.1 in \cite{Th4}. 
\end{proof}

It should be noted that in case 2) of Theorem \ref{e3} the function $\beta \mapsto
\mathbb P(-\beta \phi)$ may not have any zeroes, and hence KMS weights
(or states if $G = NW_G$) may not exist, cf. Example 3.7 in \cite{KR}.

\begin{cor}\label{e4}  Let $G$ be a cofinal graph and $\phi :
  \mathcal P(G) \to ]0,\infty[$ a function such that $\lim_{k
    \to \infty} \var_k(\phi) =0$. Consider the one-parameter group
  $\alpha^{\phi}$ on $C^*(G)$. 

\begin{enumerate}
\item[1)] Assume that $NW_G = \emptyset$. There is a gauge-invariant $\beta$-KMS weight for $\alpha^{\phi}$ for
  all $\beta \in \mathbb R$.
\item[2)] Assume that $NW_G$ is non-empty and finite.  Assume that
  $\phi$ satisfies Bowen's condition on $NW_G$. There is a
  $\beta_0 \in ]0,\infty[$ such that there is a gauge invariant $\beta$-KMS weight for
  $\alpha^{\phi}$ if and only if $\beta = \beta_0$, and it is then
  unique up to multiplication by a scalar.
\item[3)] Assume that $NW_G$ is infinite. Assume also that $\phi$ is bounded
  away from $0$ and $\infty$, i.e. there are $0 < a \leq b < \infty$
  such that $\phi(y) \in [a,b]$ for all $y\in \mathcal P(G)$. There is
  a $\beta_0 \in ]0,\infty]$ such that a gauge-invariant $\beta$-KMS weight
  for $\alpha^{\phi}$ exists if and only if $\beta \geq \beta_0$.
\end{enumerate}

\end{cor}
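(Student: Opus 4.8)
The plan is to read off all three cases from Theorem~\ref{e3} by studying the single auxiliary function $g(\beta):=\mathbb P(-\beta\phi)$, $\beta\in\mathbb R$. By Theorem~\ref{e3} the existence of a gauge invariant $\beta$-KMS weight is governed entirely by $g$: in case 1) it exists for every $\beta$, in case 2) precisely when $g(\beta)=0$, and in case 3) precisely when $g(\beta)\le 0$. Thus case 1) is literally Theorem~\ref{e3} 1) and needs nothing further. For cases 2) and 3) the whole point of the hypothesis $\phi>0$, together with the boundedness of $\phi$ on the relevant non-wandering set, is that it forces $g$ to be continuous and \emph{strictly} decreasing, so that its level and sub-level sets have exactly the asserted shape.

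First I would record the two estimates on which everything rests. Let $a>0$ and $b<\infty$ be a lower and an upper bound for $\phi$ on $\mathcal P(NW_G)$ — these exist in case 3) by assumption and in case 2) because $\mathcal P(NW_G)$ is then compact. Fix a finite path $\mu$ in $NW_G$ and set $S_n(\beta)=\sum_{\sigma^n(y)=y}e^{-\beta\phi_n(y)}1_{Z(\mu)}(y)$, so that $g(\beta)=\limsup_n\tfrac1n\log S_n(\beta)$ by Lemma~\ref{c19} and Proposition~\ref{c5} (applicable since $\var_k(-\beta\phi)=|\beta|\var_k(\phi)\to 0$). Using $na\le\phi_n(y)\le nb$ on $\mathcal P(NW_G)$ and the factorisation $e^{-\beta_2\phi_n}=e^{-\beta_1\phi_n}e^{-(\beta_2-\beta_1)\phi_n}$, for $\beta_1<\beta_2$ one gets
\begin{equation*}
e^{-(\beta_2-\beta_1)nb}\,S_n(\beta_1)\ \le\ S_n(\beta_2)\ \le\ e^{-(\beta_2-\beta_1)na}\,S_n(\beta_1),
\end{equation*}
whence, after $\tfrac1n\log$ and $\limsup_n$,
\begin{equation*}
-(\beta_2-\beta_1)b\ \le\ g(\beta_2)-g(\beta_1)\ \le\ -(\beta_2-\beta_1)a .
\end{equation*}
So $g$ is Lipschitz (hence continuous) where finite and strictly decreasing with slope at most $-a<0$; this is precisely where $\phi>0$ is used. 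I would also note $g(0)=h$, the Gurevich entropy of $NW_G$, together with $g(\beta)\le h-\beta a\to-\infty$ as $\beta\to+\infty$ and $g(\beta)\ge h-\beta b\to+\infty$ as $\beta\to-\infty$ (when $h<\infty$).

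For case 2) the space $\mathcal P(NW_G)$ is compact, so $g$ is finite, continuous and strictly decreasing on all of $\mathbb R$, tending to $+\infty$ and $-\infty$ at the two ends; by the intermediate value theorem it has a unique zero $\beta_0$, and Theorem~\ref{e3} 2) then supplies a KMS weight, unique up to a scalar, exactly at $\beta=\beta_0$. For case 3) Theorem~\ref{e3} 3) says a weight exists iff $g(\beta)\le0$; since $g$ is continuous and strictly decreasing, $\{\beta:g(\beta)\le0\}$ is the closed half-line $[\beta_0,\infty)$ with $g(\beta_0)=0$ when $h\in(0,\infty)$, and is empty — so $\beta_0=+\infty$ — when $h=+\infty$, because then $g(\beta)\ge h-\beta b=+\infty$ for every $\beta$. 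In both cases the positivity $\beta_0>0$ reduces to the single inequality $g(0)=h>0$.

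The step I expect to be the real obstacle is exactly this positivity of the Gurevich entropy in case 3), i.e. that an infinite, strongly connected, row-finite graph has $h>0$. I would argue combinatorially: fix $v\in NW_G$ and let $f_n$ count the first-return loops at $v$ of length $n$. Row-finiteness makes the set of vertices reachable from $v$ in at most $L$ steps finite, so if all first-return loops had length $\le L$ then, routing every vertex through its last visit to $v$, strong connectivity would make the whole vertex set finite; hence first-return lengths are unbounded and $\sum_n f_n=\infty$. The renewal identity $\sum_n p_n z^n=(1-\sum_n f_n z^n)^{-1}$ for the closed-loop counts $p_n$ then has a singularity strictly inside the unit disc, giving $h=\limsup_n\tfrac1n\log p_n>0$ (and the same identity shows $h=+\infty$ can occur, which is why $\beta_0=+\infty$ must be permitted). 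Feeding $h>0$ back into the monotonicity/continuity picture pins $\beta_0$ in $]0,\infty]$ and finishes case 3); for the finite graph in case 2) the analogous input is the elementary fact that the Perron value $\rho$ of the irreducible matrix of $NW_G$ satisfies $h=\log\rho>0$ once $NW_G$ is not a single cycle.
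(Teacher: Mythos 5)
Your argument is correct, and its core mechanism is the same as the paper's: both proofs reduce everything to Theorem~\ref{e3} by showing that $g(\beta)=\mathbb P(-\beta\phi)$ is finite, continuous and strictly decreasing with $\lim_{\beta\to\infty}g(\beta)=-\infty$, using the two-sided bound $a\le\phi\le b$ on $\mathcal P(NW_G)$ to obtain $(\beta-\beta')b+g(\beta)\le g(\beta')\le(\beta-\beta')a+g(\beta)$ for $\beta\le\beta'$. The paper derives this estimate by factoring $e^{-\beta'\phi_n}$ inside $L^n_{-\beta'\phi}(f)(x)$, you derive it from the periodic-orbit sums $S_n(\beta)$; after Lemma~\ref{c19} and Proposition~\ref{c5} these are the same computation. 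Your treatment of the sub-case $\mathbb P(0)=\infty$ in 3) (namely $g\equiv+\infty$, so the sub-level set in Theorem~\ref{e3}~3) is empty and $\beta_0=\infty$) is equivalent to the paper's, which instead shows $\mathbb P_x(-\beta\phi)=\infty$ and invokes Lemma~\ref{c16}. Where you genuinely add something is the positivity of $\beta_0$: since $g$ is strictly decreasing, $\beta_0>0$ is equivalent to $g(0)=\mathbb P(0)>0$, and the paper's proof never verifies this; your renewal-equation argument that an infinite irreducible row-finite graph has strictly positive Gurevich entropy supplies exactly the missing input for case 3). Your own caveat in case 2) (``once $NW_G$ is not a single cycle'') exposes a genuine degeneracy in the statement rather than in your proof: if $NW_G$ is a single cycle then $h=0$ and the unique zero of $g$ is $\beta_0=0$, so the interval in 2) should really be $[0,\infty[$. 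The only slip in your write-up is immaterial: for $\beta\to-\infty$ the estimate gives $h-\beta a\le g(\beta)\le h-\beta b$, so the lower bound is $h-\beta a$ rather than $h-\beta b$, but both tend to $+\infty$ and nothing downstream is affected.
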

\begin{proof} Only 2) and 3) require proof. Consider first case 3) and
  assume that the Gurewich entropy $P_{NW_G}(0) = \mathbb P(0)$ of $NW_G$
  is infinite. For any $\epsilon > 0$, any $x \in \mathcal P(NW_G)$ and any $\beta \in \mathbb R$ there is a finite path $\mu$ in
  $NW_G$ such that
$$
\min \left\{e^{-\beta a}, e^{-\beta b}\right\}^ne^{-n\epsilon}
\sum_{\sigma^n(y) = y} 1_{Z(\mu)}(y) \ \leq
 \ L^n_{-\beta \phi}\left(1_{Z(\mu)}\right)(x)
$$  
for all large $n$, cf. (\ref{h61}). Since $e^{\mathbb P(0)} =
\limsup_n \left( \sum_{\sigma^n(y) = y}
  1_{Z(\mu)}(y)\right)^{\frac{1}{n}} = \infty$, it follows that
$\mathbb P_x(-\beta \phi) = \infty$. By Lemma \ref{c16} there are
therefore no $\beta \phi$ conformal measure and we have to set
$\beta_0 = \infty$ in this case. 

Case 2) and case 3) with $\mathbb P(0)$ finite can be handled
together. In both cases the proof depends
  on the finiteness and continuity of the function $\beta \mapsto
  \mathbb P(-\beta \phi)$. To establish these properties, note that there are
  constants $ 0 <  a
  \leq b < \infty$
  such that $a \leq \phi(x) \leq b$ for all $x\in \mathcal
  P(NW_G)$. In case 3) this is an assumption, and in case 2) it
  follows from compactness of $\mathcal P(NW_G)$. Let $\beta, \beta'
  \in \mathbb R, \ \beta \leq \beta'$, and consider a non-negative $f \in
  C_c(\mathcal P(G))$. Then 
$$
 \sum_{y \in
  \sigma^{-n}(x)} e^{(\beta - \beta')nb  } e^{-\beta
  \phi_n(y)}f(y) \leq \sum_{y \in \sigma^{-n}(x)} e^{- \beta' \phi_n(y)}f(y) \leq \sum_{y \in
  \sigma^{-n}(x)} e^{(\beta - \beta')n  a} e^{-\beta
  \phi_n(y)}f(y),
$$
%while
%$$
%\sum_{y \in \sigma^{-n}(x)} e^{- \beta \phi_n(y)}f(y) \leq \sum_{y \in
%  \sigma^{-n}(y)} e^{(\beta' - \beta)n  b} e^{-\beta'
%  \phi_n(y)}f(y),
%$$
leading to the estimates 
$$
(\beta - \beta') b + \mathbb P(- \beta \phi) \leq \mathbb P(-\beta' \phi) \leq  (\beta -
\beta') a + \mathbb
P(-\beta \phi) .
$$ 
%and $\mathbb P(-\beta \phi) \leq (\beta' - \beta) b
%+ \mathbb P(-\beta' \phi)$. 
It follows first that $\mathbb P(-\beta \phi) \in \mathbb
R$ for all $\beta \in \mathbb R$ since $\mathbb P(0)$ is finite; in case 2)
because $NW_G$ is and in case 3) by assumption. Once this is
established the estimates above show that $\beta \mapsto \mathbb P(-\beta \phi)$
is continuous, strictly decreasing and converges to $-\infty$ when
$\beta \to \infty$. In this way 2) and 3) follow from the
corresponding cases of Theorem \ref{e3}. 
\end{proof}

If we take $\phi$ to be constant $1$ in Corollary \ref{e4}, we recover
Theorem 4.3 in \cite{Th4}.

\end{document}